\documentclass[a4paper]{amsart}

\usepackage[english]{babel}
\usepackage[utf8]{inputenc}
\usepackage{amssymb,amsmath,amsthm,amsfonts}
\usepackage{graphicx}
\usepackage[usenames,dvipsnames]{xcolor}
\usepackage{cancel}
\usepackage[normalem]{ulem}
\usepackage{dsfont}
\usepackage{enumitem}
\usepackage{microtype}
\usepackage[showonlyrefs]{mathtools}
\mathtoolsset{showonlyrefs=true}

\usepackage{tikz}
\usetikzlibrary{matrix, calc, positioning, decorations.pathreplacing, arrows.meta}
\usepackage[colorlinks]{hyperref}

\newcommand{\N}{\mathbb{N}}

\newcommand{\R}{\mathbb{R}}

\newcommand{\Om}{\Omega}

\renewcommand{\H}{H^1_0}
\renewcommand{\phi}{\varphi}

\def\N{\mathbb{N}}

\newtheorem{proposition}{Proposition}[section]
\newtheorem{theorem}[proposition]{Theorem}

\newtheorem{lemma}[proposition]{Lemma}
\theoremstyle{definition}

\newtheorem{remark}[proposition]{Remark}

\title[Rigidity of the ball for Hartree-type energies]{Rigidity of the ball for Hartree-type energies and second shape derivative via a continuity argument}

\author{Dario Mazzoleni}
\address{Dipartimento di Matematica F. Casorati\\
		Universit\`a di Pavia\\
		Via Ferrata 5, 27100 Pavia, Italy}
\email{dario.mazzoleni@unipv.it}

\author{Riccardo Moraschi}
\address{Dipartimento di Matematica F. Casorati\\
		Universit\`a di Pavia\\
		Via Ferrata 5, 27100 Pavia, Italy}
\email{r.moraschi1@campus.unimib.it}

\author{Berardo Ruffini}
\address{Dipartimento di Matematica\\
Universit\`a di Bologna\\
Piazza di Porta S.Donato 5, 40126, Bologna, Italy}
\email{berardo.ruffini@unibo.it}

\begin{document}

\begin{abstract}
We study the second shape derivative of an optimal design functional involving a nonlocal term in order to prove rigidity of the ball. Our approach relies on a continuity argument that relates the second variation to that of the corresponding local problem. The model serves as a prototype for a broader class of shape optimization problems to which the same strategy may be adapted.
\end{abstract}

\maketitle

\section{Introduction}\label{sec:intro}

We study the persistence of spherical optimal shapes under a weak nonlocal
perturbation of a spectral energy. For an open set
$\Omega\subset\mathbb R^3$ of finite measure and $q\geq0$, let
\begin{equation}\label{eq:EqOmegau}
 E_q(u,\Omega)
 :=\int_\Omega |\nabla u|^2\,dx
   +\frac q2\int_\Omega\int_\Omega
       \frac{u(x)u(y)}{|x-y|}\,dx\,dy,
\end{equation}
and define
\begin{equation}\label{eq:EqOmega}
 E_q(\Omega)
 :=\min\left\{E_q(u,\Omega):
       u\in H^1_0(\Omega),\ \int_\Omega u^2\,dx=1\right\}.
\end{equation}
Our aim is to characterize the minimizers of
\begin{equation}\label{eq:infEqOmega}
 \min\left\{E_q(\Omega):
       \Omega\subset\mathbb R^3\text{ open},\ |\Omega|=|B_1|\right\}
\end{equation}
for sufficiently small $q$, where $B_1$ is the unit ball. The parameter $q$
measures the strength of the Coulomb interaction relative to the Dirichlet
energy. The scaling identity $E_q(r\Omega)=r^{-2}E_{qr^4}(\Omega)$ also
identifies this weak-coupling regime with a small-volume regime at fixed
coupling.

At $q=0$, the functional reduces to the first Dirichlet eigenvalue
$\lambda_1(\Omega)$, and the Faber--Krahn inequality identifies balls as
minimizers under a volume constraint. It can be strenghtened to its sharp quantitative form, that was first proved 
by Brasco, De Philippis and Velichkov~\cite{BrascoDePhilippisVelichkov}.
For $q>0$, however, the two terms compete: for nonnegative functions,
symmetric decreasing rearrangement decreases the Dirichlet energy but
increases the Coulomb interaction~\cite{LiebLoss}. Moreover, no sign
constraint is imposed in~\eqref{eq:EqOmega}; replacing $u$ by $|u|$ does not
decrease the energy in general. Thus neither the optimal shape nor positivity of the
optimal state follows directly from rearrangement.

This competition is related to the Ohta--Kawasaki model for diblock
copolymers~\cite{OK86} and its small-volume-fraction
analysis~\cite{ChoksiPeletier}. For perimeter energies with Coulomb
repulsion, Kn\"upfer and Muratov~\cite{KM2} established spherical
minimality at small mass, including the three-dimensional Coulomb case.
In the spectral setting, Mazzoleni and Ruffini~\cite{MazzoleniRuffini}
proved small-repulsion rigidity for the sum of the first Dirichlet
eigenvalue and a Riesz interaction of the domain. Here the interaction
depends on the optimizing state, which changes with both $q$ and $\Omega$;
the energy is therefore not a sum of two independently prescribed shape
functionals. Another related problem is the charge-qubit model of
Mazzoleni, Muratov and Ruffini~\cite{MazzoleniMuratovRuffini}, whose
Hartree term involves $u^2(x)u^2(y)$, whereas the interaction
in~\eqref{eq:EqOmegau} is quadratic in $u$.

For the present functional, the authors, in a previous paper~\cite{MazzoleniMoraschiRuffini} proved existence and convergence
of optimal shapes to the ball by combining surgery, variational
convergence and one-phase free boundary regularity. 
We recall here the result.
\begin{theorem}[\cite{MazzoleniMoraschiRuffini}]\label{thm:main}
There exist $q^*>0$ and $\alpha\in(0,1)$ such that, for every
$0<q\leq q^*$, problem~\eqref{eq:infEqOmega} admits a minimizer.
Every minimizer $\Omega_q$ satisfies (up to translations and sets of zero Lebesgue measure)
\[
 \partial\Omega_q
 =\{(1+\varphi_q(x))x:x\in\partial B_1\},
 \qquad
 \|\varphi_q\|_{C^{2,\alpha}(\partial B_1)}\longrightarrow0
 \quad\text{as }q\to 0^+.
\]
\end{theorem}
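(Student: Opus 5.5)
The plan has three stages: existence of a minimizer by a concentration--compactness and surgery argument, convergence of minimizers to the ball via Faber--Krahn, and upgrading this to $C^{2,\alpha}$ graph convergence through one-phase free boundary regularity.

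\textbf{Existence.} First I would relax~\eqref{eq:infEqOmega} to quasi-open sets, or equivalently to the class $\{|\{u\neq0\}|\le|B_1|\}$, working with $u\in H^1(\mathbb R^3)$. The Coulomb term is a positive definite quadratic form, since its kernel has positive Fourier transform. By Hardy--Littlewood--Sobolev it is bounded by $C q\|u\|_{L^{6/5}}^2\le Cq|\Omega|^{2/3}\|u\|_{L^2}^2$. Hence $E_q(\Omega)\ge\lambda_1(\Omega)$, and also $E_q(\Omega)\le\lambda_1(\Omega)+Cq$, so minimizing sequences have bounded energy.

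I would run concentration--compactness on the states $u_n$. Vanishing is excluded by Lieb's lemma. Dichotomy is excluded using the scaling $E_q(r\Omega)=r^{-2}E_{qr^4}(\Omega)$: it gives strict subadditivity of the optimal energy as a function of the volume, at least for $q$ small, because splitting into two pieces costs a Faber--Krahn amount of order $1$ while the Coulomb interaction between the pieces is only $O(q)$. This yields a minimizing quasi-open set with an optimal state $u_q$.

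Next I would show that $u_q$ is a quasi-minimizer of the one-phase Alt--Caffarelli functional $\int|\nabla u|^2+\Lambda|\{u\neq0\}|$. The Coulomb term enters as a lower-order perturbation, since $u\mapsto \int u(y)|x-y|^{-1}dy$ is bounded in $L^\infty$ by the $L^2$ bound and the measure bound. From this I get Lipschitz continuity and nondegeneracy. These in turn give boundedness of the support, so one can take $\Omega_q=\{u_q\neq0\}$ as an open set.

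A genuine difficulty is that $u_q$ might change sign, which would make the problem two-phase. For small $q$ I would instead show that $u_q$ is close in $H^1$ to the first eigenfunction, as explained below. A sign-changing state would then carry Dirichlet energy at least $\lambda_2\ge\lambda_1+c$, contradicting $E_q\le\lambda_1(B_1)+Cq$. This leaves $u_q$ of one sign near the free boundary, or I fall back on two-phase regularity theory.

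\textbf{Convergence to the ball.} From $\lambda_1(\Omega_q)\le E_q(\Omega_q)\le E_q(B_1)\le\lambda_1(B_1)+Cq$, the quantitative Faber--Krahn inequality of Brasco--De Philippis--Velichkov gives that the Fraenkel asymmetry of $\Omega_q$ tends to $0$. By $\gamma$-convergence, $u_q$ converges in $H^1$ to the first eigenfunction of $B_1$, up to translations.

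Uniform Lipschitz bounds and nondegeneracy upgrade this to Hausdorff convergence of the free boundaries. The free boundary condition reads $|\nabla u_q|^2=\Lambda_q$ on $\partial\Omega_q$ in the viscosity sense. Since the solutions are uniformly flat near $\partial B_1$, De Silva's $\epsilon$-regularity theorem gives uniform $C^{1,\alpha}$ regularity, so $\partial\Omega_q$ is a $C^{1,\alpha}$ radial graph $(1+\varphi_q)x$ with $\|\varphi_q\|_{C^{1,\alpha}}\to0$.

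\textbf{$C^{2,\alpha}$ upgrade.} Here $u_q$ solves $-\Delta u_q+q\,(|x|^{-1}*u_q)=E_q u_q$, whose right-hand side is $C^{1,\alpha}$. Schauder estimates up to the boundary, combined with the Kinderlehrer--Nirenberg hodograph argument, bootstrap the regularity to $C^{2,\alpha}$ uniformly in $q$. Interpolation then gives $\|\varphi_q\|_{C^{2,\alpha}}\to0$.

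\textbf{Main obstacle.} I expect the hardest step to be the rigorous reduction to one-phase quasi-minimality for a sign-unconstrained, nonlocal state, together with excluding dichotomy at the existence stage.
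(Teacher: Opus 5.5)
Your overall architecture matches the outline the paper gives for this cited result (surgery, variational convergence, one-phase free boundary regularity): existence, convergence to $B_1$ via Faber--Krahn and compactness, then one-phase regularity bootstrapped to $C^{2,\alpha}$. However, the step that reduces to a one-phase problem fails as written.

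You claim that a sign-changing state would carry Dirichlet energy at least $\lambda_2\ge\lambda_1+c$. That bound holds for sign-changing \emph{eigenfunctions of the Dirichlet Laplacian}, or for functions orthogonal to the ground state. It does not hold for $u_q$, which solves a nonlocal equation. A normalized sign-changing function can have Rayleigh quotient arbitrarily close to $\lambda_1$: take $u_0-\varepsilon\psi$, with $\psi\ge0$ a bump of width $\rho$ at distance $\sim\rho$ from $\partial B_1$ and $\varepsilon\sim\rho$.

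So $H^1$-closeness to $u_0$, or even uniform closeness once Lipschitz bounds are available, fixes the sign only away from $\partial\Omega_q$. That is exactly where you need the one-phase theory. Moreover, the nonlocal term actively rewards sign changes: $D(|u|,|u|)-D(u,u)=4D(u^+,u^-)\ge0$, where $D(f,g)=\int\!\int f(x)g(y)|x-y|^{-1}\,dx\,dy$.

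Since $\Omega_q=\{u_q\neq0\}$ up to null sets, a negative phase would be part of $\Omega_q$. It could be a small separate component, which would break the graph structure. Or it could lie along $\partial\Omega_q$, where one-phase quasi-minimality and De Silva's theorem do not apply. Falling back on two-phase regularity still leaves you needing an argument that this phase vanishes.

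The missing ingredient is a surgery argument that uses optimality of the \emph{domain}, not just of the state. Let $\Omega^\pm=\{\pm u_q>0\}$, choose the sign so that $|\Omega^-|\le|B_1|/2$, and set $m=\|u_q^-\|_2^2$. A crude energy bound already gives $m\le Cq$. Compare with $\Omega^+$ dilated back to volume $|B_1|$.

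There are two gains. Faber--Krahn on $\Omega^-$ gives a gain of order $m|\Omega^-|^{-2/3}$. The strict monotonicity $e_q(V)\ge e_q(|B_1|)+c(|B_1|-V)$ of the optimal value in the volume gives a gain of order $|\Omega^-|$. This monotonicity follows from $E_q(r\Omega)=r^{-2}E_{qr^4}(\Omega)$ and the Lipschitz dependence of the optimal value on $q$.

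The only loss is the cross term, which HLS and H\"older control:
\[
c\,|\Omega^-|+c\,m\,|\Omega^-|^{-2/3}\ \le\ q\,D(u_q^+,u_q^-)\ \le\ Cq\,|\Omega^-|^{1/3}\sqrt m .
\]
By AM--GM the left side is at least $c\sqrt m\,|\Omega^-|^{1/6}$. Hence $Cq\,|\Omega^-|^{1/6}\ge c$ unless $u_q^-\equiv0$, which is impossible for small $q$.

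With this step placed before your quasi-minimality argument, the rest of your plan is a sound route:
\begin{itemize}
\item Alt--Caffarelli quasi-minimality, with the Coulomb potential bounded in $L^\infty$;
\item quantitative Faber--Krahn plus compactness;
\item De Silva's flatness theorem;
\item the Kinderlehrer--Nirenberg bootstrap with constant datum $|\nabla u_q|^2=\Lambda_q$.
\end{itemize}
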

This reduces the rigidity question to a neighborhood of the ball, but
does not exclude non-spherical minimizers converging to it. The purpose
of this paper is to establish the rigidity conjectured
in~\cite{MazzoleniMoraschiRuffini}.
\begin{theorem}[Rigidity]\label{thm:intro-rigidity}
There exists $q_0>0$ such that, for every $q\in[0,q_0]$, the unit ball
minimizes~\eqref{eq:infEqOmega}, uniquely up to translations (and
modifications on sets of zero capacity). 
\end{theorem}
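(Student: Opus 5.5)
By Theorem~\ref{thm:main}, for $q$ small every minimizer $\Omega_q$ is, up to translation, a nearly spherical set $B_{\varphi_q}$ with $\|\varphi_q\|_{C^{2,\alpha}}\to0$. It therefore suffices to show that the ball is a \emph{strict local minimizer in $C^{2,\alpha}$}, uniformly in $q\in[0,q_0]$. Concretely, we aim for a uniform quantitative inequality
\[
E_q(B_\varphi)-E_q(B_1)\ \ge\ c\,\|\varphi\|_{H^{1/2}(\partial B_1)}^2
\]
for all volume-preserving, barycenter-normalized $\varphi$ with $\|\varphi\|_{C^{2,\alpha}}\le\delta$, where $c,\delta>0$ do not depend on $q\le q_0$. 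Applied to $\varphi=\varphi_q$ (after translating $\Omega_q$ so that its barycenter is $0$), minimality gives $E_q(B_{\varphi_q})\le E_q(B_1)$, hence $\varphi_q=0$. This also handles existence: since a minimizer exists and must be a translate of the ball, the ball minimizes. Uniqueness up to sets of zero capacity follows from the same argument combined with the regularity in Theorem~\ref{thm:main}.

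The local inequality will come from a Taylor expansion along the flow $t\mapsto B_{t\varphi}$, using the standard second-order shape-derivative scheme in the style of Dambrine, or Brasco--De Philippis--Velichkov. First we need simplicity and smooth dependence of the ground state $u_{q,\Omega}$ for $q$ small. The operator $-\Delta+q\,(|x|^{-1}*\cdot)$ is a bounded self-adjoint perturbation of size $O(q)$ of $-\Delta$, so the first eigenvalue stays simple with a spectral gap. The ground state $u_q$ on $B_1$ is then radial, by uniqueness and rotation invariance, and positive for small $q$. Next, because $u_q$ is radial, the first variation at $B_1$ is $\int_{\partial B_1}|\partial_\nu u_q|^2\varphi$ with $\partial_\nu u_q$ constant, so it vanishes under the volume constraint. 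The second variation is a quadratic form $Q_q(\varphi)$ that decomposes diagonally in spherical harmonics $Y_k$. The shape derivative $u'$ solves a nonlocal equation, $-\Delta u' + q\,V[u']+q\,V[u_q]$-type terms $=E_q u'+\dots$ in $B_1$ with $u'=-\partial_\nu u_q\,\varphi$ on $\partial B_1$, and by symmetry each mode $k$ decouples.

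For the coercivity of $Q_q$, the key is a \emph{continuity argument} in $q$. At $q=0$ the second variation of $\lambda_1$ at the ball is known to satisfy $Q_0(\varphi)\ge c_0\|\varphi\|_{H^{1/2}}^2$ on the modes $k\ge2$. Modes $k=0$ and $k=1$ are removed by the volume constraint and translation invariance; for $q>0$ both invariances still hold, since $E_q$ is translation invariant. We then show that $|Q_q(\varphi)-Q_0(\varphi)|\le Cq\|\varphi\|_{H^{1/2}}^2$. This follows from the $H^1$/$H^2$ bounds $\|u_q-u_0\|=O(q)$ and $\|u'_q-u'_0\|_{H^1}\le Cq\|\varphi\|_{H^{1/2}}$, which we derive by resolvent estimates using the spectral gap and the boundedness of the Coulomb operator $H^{-1}\to H^1$ on bounded sets. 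Hence $Q_q\ge (c_0/2)\|\varphi\|_{H^{1/2}}^2$ for small $q$.

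Finally, we must pass from the second variation at $B_1$ to the full energy difference. Write
\[
E_q(B_\varphi)-E_q(B_1)=\tfrac12 Q_q(\varphi)+\omega(\|\varphi\|_{C^{2,\alpha}})\,\|\varphi\|_{H^{1/2}}^2,
\]
where $\omega$ is a modulus of continuity uniform in $q$. This requires continuity of the second derivative $\frac{d^2}{dt^2}E_q(B_{t\varphi})$ in $t$ with respect to the $H^{1/2}$ norm, uniformly in $q$. This is the \textbf{main obstacle}: we must control the nonlocal state-dependent terms together with the boundary traces $\partial_\nu u$ on perturbed domains, using Schauder estimates (available from the $C^{2,\alpha}$ control) and the same perturbative treatment of the Coulomb term as above. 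We expect the Coulomb contributions to be lower order, since the potential is smoothing. The delicate part is the Dirichlet part, and there we plan to reuse the known estimates for $\lambda_1$ from the quantitative Faber--Krahn literature, treating the $q$-terms as $O(q)$ perturbations.
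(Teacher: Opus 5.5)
Your strategy is the paper's. You reduce to nearly spherical competitors via Theorem~\ref{thm:main}, Taylor-expand along the radial path, and inherit coercivity and the Taylor remainder from $q=0$ by showing that the $q$-dependent second variation differs from the $q=0$ one by $o_q(1)\|\varphi\|_{H^{1/2}}^2$, not merely $o_q(1)$. Two steps, however, are not right as written.

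\emph{Constraints along the path.} Along $t\mapsto B_{t\varphi}$ only the endpoint has volume $|B_1|$ and barycentre $0$.
\begin{itemize}
\item The first variation does not vanish: $E_q'(0)=-\mu_q\int_{\partial B_1}\varphi\,d\mathcal H^2=\mu_q\int_{\partial B_1}(\varphi^2+\varphi^3/3)\,d\mathcal H^2$. This is of exactly the order you are trying to bound from below.
\item The modes $k=0,1$ are not removed either. The constraints only make the projection of $\varphi$ onto them $O(\delta)\|\varphi\|_{H^{1/2}}$.
\item This matters because the raw second derivative of $E_0$ along your path equals the second derivative of $E_0+\mu_0\mathrm{Vol}$ minus $2\mu_0\|\varphi\|_{L^2(\partial B_1)}^2$. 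The raw form is negative on $k=1$, and the known coercivity on $k\ge 2$ refers to the Lagrangian form, not the raw one.
\end{itemize}

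This is repairable. $E_q'(0)$ recombines with $\frac12 E_q''(0)$ into $\frac12(E_q+\mu_q\mathrm{Vol})''(0)$, up to an $O(\delta)\|\varphi\|^2$ error. The small $k=0,1$ components then cost only $O(\delta)\|\varphi\|_{H^{1/2}}^2$.

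The paper avoids this bookkeeping by working with $j_q=E_q+\mu_q\mathrm{Vol}+\tilde C|\mathrm{Bar}|^2+\tilde C(\mathrm{Vol}-|B_1|)^2$. For this functional:
\begin{itemize}
\item $j_q'(0)=0$ exactly;
\item $j_0''(0)\ge C\|\varphi\|_{H^{1/2}}^2$ on all of $H^{1/2}$;
\item $j_q(1)-j_q(0)=E_q(\Omega^\varphi)-E_q(B_1)$ for admissible $\varphi$.
\end{itemize}
Both the coercivity and the remainder bound for $j_0$ then come directly from Dambrine--Lamboley.

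\emph{The remainder.} Your bound $|Q_q-Q_0|\le Cq\|\varphi\|^2$ only concerns $s=0$. It does not control $\int_0^1(1-s)(E_q''(s)-E_q''(0))\,ds$. What is needed is the same comparison at every $s\in[0,1]$, which is what the paper proves in Lemma~\ref{lem:convergence operator}. Namely, $\mathcal L^\varphi_{q,s}\to\mathcal L^\varphi_{0,s}$ in $\mathcal L(H^{1/2}(\partial\Omega^\varphi_s),H^{-1/2}(\partial\Omega^\varphi_s))$, uniformly in $s$ and in $\varphi\in\mathcal A_\delta$.

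One then splits $j_q''(s)-j_q''(0)$ as $[j_q''(s)-j_0''(s)]+[j_0''(s)-j_0''(0)]+[j_0''(0)-j_q''(0)]$. The middle term is the $q=0$ remainder. The outer two terms are $o_q(1)\|\varphi\|^2_{H^{1/2}}$.

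Proving this requires, uniformly on all perturbed domains $\Omega_s^\varphi$ and not just on $B_1$:
\begin{itemize}
\item the spectral gap;
\item the coercivity of $a_q$;
\item the $C^{2,\alpha}$ convergence $u_{q,s}\to u_{0,s}$.
\end{itemize}
Your plan of ``treating the $q$-terms as $O(q)$ perturbations'' is the right idea, but only in this uniform operator-norm form.
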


\textbf{Idea of the proof.} Since we already know that the optimal domains for \eqref{eq:infEqOmega} are $C^{2,\alpha}$-nearly spherical, we can perform a second order shape derivative to prove that the ball is the only minimizer.
Namely, let us formally consider a function $f_q(t)$. We aim to prove the following inequality 
\[f_q(1)\geq f_q(0)+c,\]
where $c$ is just a positive quantity. In the setting of shape functionals, this is roughly equivalent to proving that the function $f_q$ admits a minimum in $0$.
So we consider the Taylor-Lagrange expansion around $0$ and we evaluate it at $t=1$:
\begin{equation}\label{eq:}
f_q(1)=f_q(0)+f_q'(0)+\frac{1}{2}f_q''(0)+\int_0^1(1-s)(f_q''(s)-f_q''(0))\,ds.
\end{equation}
After the computations of the first and second derivative, it can be shown that $f'(0)=0$. So, in order to conclude, some estimates on the remainder terms are needed . This is usually the most technical part when addressing a shape derivative argument. To avoid it, the idea is to employ that $f_q$ is, for $q$ small, very close to $f_0$, which in our context is a well-known functional. So the idea is to inherit all the needed estimate from $f_0$ by a continuity argument. The delicate point is to understand how close is $f''_q(t)$ to $f''_0(t)$: indeed it is quite easy to show that as $f''_q(t) \to f''_0(t)$ as $q\to 0$ but no quantitative estimate about how small is the gap $f''_q(t)-f''_0(t)$ is obtained. It turns out that it is enough to show the convergence of the operator associated to $f''_q$ and $f''_0$. A deeper explanation is postponed to Remark \ref{rem:spiegazione convergenza operatoriale}.  

A priori, this convergence argument should work by an hierarchy of the functional spaces. Indeed since  \[\int_\Omega\int_\Omega
       \frac{u(x)u(y)}{|x-y|}\,dx\,dy=4\pi[u]^2_{\dot H^{-1}(\R^3)}\leq C(|\Omega|)[u]^2_{\H(\Omega)}, \]
 this means that we are adding a lower order seminorm to $[u]^2_{\H(\Omega)}$. Then as $q$ goes to $0$, it is reasonable that not only the energy but also the derivatives of it converge to the ones of the stronger norm.
\vspace{2pt}

The proof transfers the second-order stability of the first Dirichlet eigenvalue to $E_q$. The relevant framework is the theory of stability through second variations developed by Dambrine and Lamboley~\cite{DambrineLamboley}: the Taylor remainder must be controlled in the same, weaker norm in which the shape Hessian is coercive. In our case this is the $H^{1/2}$ norm of the boundary displacement, although the perturbations are small in $C^{2,\alpha}$. This distinction is essential for obtaining estimates uniform over all sufficiently small deformations.

\textbf{Section 2.} We first pull the state equation back to the fixed ball and apply the implicit function theorem at the normalized first Dirichlet eigenpair. Simplicity of the eigenvalue makes the linearized equation invertible once the normalization is included. For small deformations, derivatives of the pulled-back Coulomb kernel retain the integrable singularity $|x-y|^{-1}$, allowing smooth dependence on the shape and on $q$. Elliptic estimates then yield uniform convergence of the states and their boundary derivatives as $q\to 0^+$. On the ball, bijectivity of the implicit function and rotational invariance give radial symmetry, hence stationarity under the volume constraint.

\textbf{Section 3.} The main estimate concerns the linearized states associated with arbitrary normal boundary data $h\in H^{1/2}$. In particular Lemma \ref{le:a_q coerciva e stime} gives the linear and continuous dependence of the linearized states with respect to the boundary value. Substitution into the second-variation formula yields, for the normal graphs
$\partial\Omega_t^\varphi=\{(1+t\varphi(x))x:x\in\partial B_1\}$,
\[
 \sup_{\varphi\in\mathcal A_\delta}\sup_{t\in[0,1]}
 \bigl\|\mathcal L_{q,t}^\varphi-\mathcal L_{0,t}^\varphi\bigr\|_{
 \mathcal L(H^{1/2}(\partial\Omega_t^\varphi),
             H^{-1/2}(\partial\Omega_t^\varphi))}
 \longrightarrow0.
\]
Here $\mathcal A_\delta$ consists of volume-constrained normal graphs with $C^{2,\alpha}$ norm at most $\delta$, and the operator $\mathcal L_{q,t}^\varphi$ represents the second variation along these paths. Crucially, the resulting error is $o_q(1)\|\varphi\|_{H^{1/2}(\partial B_1)}^2$. Note that the pointwise convergence of the second derivatives at each fixed deformation would only give only an error of the type $o_q(1)$, insufficient to conclude.

\textbf{Section 4.} To handle the volume constraint and translation invariance along paths that need not preserve either volume or barycentre, we follow the penalization approach of~\cite{DambrineLamboley} and set
\[
 J_q(\Omega)=E_q(\Omega)+\mu_q|\Omega|
       +K\bigl(|\Omega|-|B_1|\bigr)^2
       +K|\operatorname{Bar}(\Omega)|^2,
 \qquad
 \mu_q=|\partial_\nu u_{q,B_1}|^2_{\big|_{\partial B_1}}.
\]
Here $\operatorname{Bar}$ denotes the barycentre and $K$ is sufficiently large. The multiplier makes the ball stationary, while the quadratic terms control the volume and translation modes. The coercivity and Taylor-remainder estimates for $J_0$, together with the operator-norm comparison, imply
\[
 E_q(\Omega^\varphi)-E_q(B_1)
 \geq c\|\varphi\|_{H^{1/2}(\partial B_1)}^2
\]
for small $q$ and small $\varphi$, provided $|\Omega^\varphi|=|B_1|$ and $\operatorname{Bar}(\Omega^\varphi)=0$. Theorem~\ref{thm:main} places every minimizer in this neighborhood after translation, so this local estimate proves global rigidity.

\vspace{0.5cm}
{\bf Note.}
This work started in the Spring of 2026 and was mostly carried out in June 2026 when D. Mazzoleni and R. Moraschi visited B. Ruffini at the Department of Mathematics of the University of Bologna.
When the paper was in the last phases of preparation,  the work~\cite{Topaloglu} appeared on arXiv. In~\cite{Topaloglu} the author proves a result analogous to Theorem~\ref{thm:intro-rigidity}, but with different techniques (in particular without a shape derivative argument). 
Moreover, in the references of~\cite{Topaloglu}, we discovered the anonymous paper~\cite{Anonymous}. This latter paper, proves rigidity for functional proposed in~\cite{MazzoleniMuratovRuffini} and similar to~\eqref{eq:EqOmegau}, where the nonlocal energy is $D(u^2,u^2)$ instead of $D(u,u)$. Although the techniques therein are more in the spirit of the present paper (and allow also to treat critical points), they still do not employ a second shape derivative which is a main feature of our work.

\section{Differentiability of the functional}
We consider deformations of the form $\Phi_\theta(y) = y + \theta(y)$, mapping $B_1$ onto $\Om_\theta = \Phi_\theta(B_1)$, with $\theta \in C^{2,\alpha}(\overline{B_1}, \R^3)$. In this section we show that the optimal function $u_q$ on $\Phi_\theta(B_1)$ is differentiable as $\theta$ goes to $0$.

Defining $J_\theta = \det(I + D\theta)$, $A_\theta = (I + D\theta)^{-T}$ and the rescaled state function $v = u \circ \Phi_\theta$, the operator $F(\theta, q, v, \lambda)$ is given by:
\[
    F: C^{2,\alpha}(\overline{B_1}, \R^3) \times \R \times \left( H^2(B_1) \cap H_0^1(B_1) \right) \times \R \longrightarrow L^2(B_1) \times \R
\]

\begin{equation}\label{eq: funzionale del dini}
        F(\theta, q, v, \lambda) = 
    \begin{pmatrix}
        -\text{div}\left(J_\theta A_\theta^T A_\theta \nabla v\right) - \lambda J_\theta v + \frac{q}{2} J_\theta V_\theta[v] \\[8pt]
        \int_{B_1} v^2 J_\theta \, dy - 1
    \end{pmatrix}
\end{equation}
where \[V_\theta[v](y) = \int_{B_1} \frac{v(z)}{|\Phi_\theta(y) - \Phi_\theta(z)|} J_\theta(z) \, dz.\]
 For the sake of simplicity, we also denote the operator
\begin{equation}\label{eq:Lq}
L_{q,\Omega}(u)=-\Delta u+\frac{q}{2}\int_\Omega\frac{u(x)}{|x-y|}\,dx,\qquad u\in H^1_0(\Omega), \;\int_\Omega u^2=1,
\end{equation}
and its Dirichlet eigenvalues $\lambda_{k,q}(\Omega)$, for $k\in \N$.

Setting $\lambda_0$ the first eigenvalue of the Dirichlet Laplacian on $B_1$ and $v_0$ the corresponding unique strictly positive, $L^2$-normalized eigenfunction, we shall apply the implicit function theorem to show that,
near $(0,0,v_0,\lambda_0)$, the zero set of $F$ is a smooth
manifold.

\begin{theorem}\label{thm: dini}
Let $\lambda_0$ be the first eigenvalue of the Dirichlet Laplacian on $B_1$ and $v_0$ be the corresponding unique strictly positive, $L^2$-normalized eigenfunction. 
There exist neighborhoods $U \subset C^{2,\alpha}(\overline{B_1}, \R^3) \times \R$ of $(0,0)$, $V\subseteq \left( H^2(B_1) \cap H_0^1(B_1) \right) \times \R$ of $(v_0,\lambda_0)$ and a unique $C^\infty$ map 
\[
    g: U \longrightarrow V, \quad g(\theta,q) = (v_{\theta,q}, \lambda_{\theta,q})
\]
such that $g(0,0) = (v_0, \lambda_0)$ and $F(\theta, q, v_{\theta,q}, \lambda_{\theta,q}) = (0,0)^T$ for all $(\theta,q) \in U$. In particular considering $ (\theta, q, v, \lambda)\in U\times V$, then
\begin{equation}\label{eq:uniqueness dini}
  F(\theta, q, v, \lambda)=(0,0)^T  \iff g(\theta,q)= (v,\lambda).
\end{equation}
 Moreover, up to possibly shrinking $U$ and $V$, $(v_{\theta,q}, \lambda_{\theta,q})$ are, respectively, the first eigenfunction and the first eigenvalue for the operator $L_{q,\Omega}$ defined in~\eqref{eq:Lq}, with Dirichlet boundary conditions.
\end{theorem}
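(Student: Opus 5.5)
The plan is to apply the implicit function theorem in Banach spaces to $F$ at the point $(0,0,v_0,\lambda_0)$, in three steps.

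\textbf{Step 1: $F$ is well defined and $C^\infty$.} The local part, involving $J_\theta$ and $A_\theta$, depends polynomially on $D\theta$ and rationally on $\det(I+D\theta)$. Near $\theta=0$ this is a $C^\infty$ map into $C^{1,\alpha}$ coefficients. Multiplying such coefficients by $\nabla v\in H^1$ and taking the divergence lands in $L^2$, and this operation is bilinear and continuous, hence smooth.

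The nonlocal term needs more care. Write
\[
|\Phi_\theta(y)-\Phi_\theta(z)|=|y-z|\,\Big|\int_0^1 (I+D\theta(z+s(y-z)))\tfrac{y-z}{|y-z|}\,ds\Big|=:|y-z|\,m_\theta(y,z).
\]
For $\|\theta\|_{C^1}$ small, $m_\theta\ge 1/2$. Each Fréchet derivative in $\theta$ of $m_\theta^{-1}$ is bounded uniformly in $(y,z)$. Hence
\[
V_\theta[v](y)=\int_{B_1}\frac{v(z)J_\theta(z)}{|y-z|\,m_\theta(y,z)}\,dz,
\]
and every derivative keeps a kernel bounded by $C|y-z|^{-1}$. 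By Young's (or Schur's) inequality this kernel is bounded on $L^2(B_1)$, so $(\theta,v)\mapsto J_\theta V_\theta[v]$ is $C^\infty$ into $L^2$. The constraint component is clearly smooth.

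\textbf{Step 2: invertibility of the linearization.} Since $\theta=0$ and $q=0$, we have
\[
D_{(v,\lambda)}F(0,0,v_0,\lambda_0)[w,\mu]=\Big(-\Delta w-\lambda_0 w-\mu v_0,\;2\int_{B_1} v_0 w\Big).
\]
To solve $(f,a)$, test the first equation against $v_0$. The Fredholm alternative forces $\mu=-\int f v_0$. Then $w=w^\perp+\tfrac a2 v_0$, where $w^\perp\perp v_0$ solves $(-\Delta-\lambda_0)w^\perp=f+\mu v_0$. This $w^\perp$ is unique because $\lambda_0$ is simple, and it lies in $H^2\cap H^1_0$ by elliptic regularity on the ball. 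This gives a bounded bijection, and the open mapping theorem gives a bounded inverse. The implicit function theorem then yields $U$, $V$, the $C^\infty$ map $g$, and the equivalence \eqref{eq:uniqueness dini}.

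\textbf{Step 3: identification with the first eigenpair.} Pulling back by $\Phi_\theta$, $F=0$ says exactly that $u=v\circ\Phi_\theta^{-1}$ solves $L_{q,\Omega_\theta}u=\lambda u$ with $\|u\|_{L^2}=1$. So $(v_{\theta,q},\lambda_{\theta,q})$ is some normalized eigenpair. To see it is the first one:
\begin{itemize}
\item The operator $L_{q,\Omega}$ is self-adjoint with compact resolvent.
\item Its quadratic form, pulled back, depends continuously on $(\theta,q)$.
\item By min–max, $\lambda_{1,q}(\Omega_\theta)\to\lambda_0$ and $\lambda_{2,q}(\Omega_\theta)\to\lambda_2(B_1)>\lambda_0$.
\end{itemize}
Shrinking $U$ and $V$ so that $|\lambda-\lambda_0|<\tfrac12(\lambda_2(B_1)-\lambda_0)$, the pair must therefore be the first eigenpair, and that eigenvalue is simple.

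Next, take the first eigenfunction $u_1$, normalized and with sign chosen so that $\int (u_1\circ\Phi_\theta)v_0>0$. It converges to $v_0$ in $H^1$, and in $H^2$ through the equation, using uniform elliptic estimates with $C^{1,\alpha}$ coefficients. So its pullback lies in $V$, and uniqueness identifies it with $v_{\theta,q}$.

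I expect the main obstacle to be Step 1, namely the smoothness of the pulled-back Coulomb term. This requires a careful argument that derivatives of $m_\theta^{-1}$ in $\theta$ retain the integrable $|y-z|^{-1}$ singularity uniformly.
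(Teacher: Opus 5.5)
Your proposal is correct and follows essentially the same route as the paper. Both use the implicit function theorem at $(0,0,v_0,\lambda_0)$ with the same linearization and Fredholm-alternative inversion, the same factorization $|\Phi_\theta(y)-\Phi_\theta(z)|=|y-z|\,|(I+M_\theta)\xi|$ of the pulled-back Coulomb kernel, and the same spectral-gap threshold argument identifying $(v_{\theta,q},\lambda_{\theta,q})$ as the simple first eigenpair. You are somewhat more explicit than the paper: you use Schur's test for the $L^2$-boundedness of the kernel derivatives, and min--max on the pulled-back forms for the $\theta$-continuity of $\lambda_{1,q}$ and $\lambda_{2,q}$. Your final step, placing the sign-normalized first eigenfunction in $V$, is harmless but unnecessary once simplicity is known, since $v_{\theta,q}$ is already $H^2$-close to $v_0>0$.
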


\begin{proof}
The proof relies on verifying the hypotheses of the Dini's Theorem in Banach spaces (see \cite[Theorem 1.4]{AmbrosettiMalchiodi}).  Since $J_0 = 1$ and $A_0 = I$, then $F(0, 0, v_0, \lambda_0) = (0,0)^T$, recalling that for $\theta=0,q=0$ we can use the equation satisfied by $u$.
Along a perturbation direction $(w, \mu)$, the directional derivative is defined as:
\[
    D_{(v,\lambda)}F(0,0,v_0,\lambda_0)[w, \mu] = \left. \frac{d}{ds} F(0,0, v_0+sw, \lambda_0+s\mu) \right|_{s=0}.
\]
So
\begin{align*}
    F(0,0, v_0+sw, \lambda_0+s\mu) &= 
    \begin{pmatrix}
        -\text{div}(\nabla(v_0+sw)) - (\lambda_0+s\mu)(v_0+sw) \\[8pt]
        \int_{B_1} (v_0+sw)^2 \, dy - 1
    \end{pmatrix} \\[10pt]
    &= 
    \begin{pmatrix}
        -\Delta v_0 - s\Delta w - \lambda_0 v_0 - s\mu v_0 - s\lambda_0 w - s^2\mu w \\[8pt]
        \int_{B_1} (v_0^2 + 2swv_0 + s^2w^2) \, dy - 1
    \end{pmatrix}.
\end{align*}
Taking the derivative with respect to $s$ and evaluating a $s=0$ we obtain:
\[
    D_{(v,\lambda)}F(0,0,v_0,\lambda_0)[w, \mu] = 
    \begin{pmatrix}
        -\Delta w - \mu v_0 - \lambda_0 w \\[8pt]
        2 \int_{B_1} w v_0 \, dy
    \end{pmatrix}.
\]
To show that this linear map is an isomorphism, we fix a datum $(g, c) \in L^2(B_1) \times \R$ and seek a unique pair $(w, \mu) \in \left(H^2(B_1) \cap H_0^1(B_1)\right) \times \R$ solving:
\[
    \begin{cases}
        -\Delta w - \lambda_0 w - \mu v_0 = g \\
        2 \int_{B_1} w v_0 \, dy = c.
    \end{cases}
\]
Since $\lambda_0$ is a simple eigenvalue, by the Fredholm alternative (e.g., \cite[Lemma 5.7.3 ]{henrotpierre2018}), there exists a unique weak solution $w \in H_0^1(B_1)$ to the resulting orthogonal system. Because $g + \mu v_0 \in L^2(B_1)$, standard elliptic regularity theory (see \cite[Theorem 8.12]{GilbargTrudinger}) ensures $w \in H^2(B_1)$, proving bijectivity. 

Furthermore, $ D_{(v,\lambda)}F(0,0,v_0,\lambda_0)$ is continuous since it is linear and bounded. As a linear, continuous, and bijective operator between Banach spaces $\left(H^2(B_1) \cap H_0^1(B_1)\right) \times \R$ and $L^2(B_1) \times \R$, the Open Mapping Theorem guarantees that its inverse is also continuous. Thus, $D_{(v,\lambda)}F(0,0,v_0,\lambda_0)$ is a topological isomorphism.

Finally, we must justify the $C^\infty$ regularity of the operator, paying particular attention to the nonlocal term. We factorize the spatial deformation as:
\[
    \Phi_\theta(x) - \Phi_\theta(y) = (x-y) + \left( \int_0^1 D\theta(tx + (1-t)y) \, dt \right)(x-y)= (I + M_\theta)(x-y),
\]
where $M_\theta(x,y) := \int_0^1 D\theta(tx + (1-t)y) \, dt$. Setting $\xi = \frac{x-y}{|x-y|}$, the square of the distance becomes:
\[
    |\Phi_\theta(x) - \Phi_\theta(y)|^2 = |x-y|^2 \left( \xi^T G_\theta \xi \right), \quad \text{with } G_\theta = (I + M_\theta)^T (I + M_\theta).
\]
Consequently, the singular kernel takes the form:
\[
    \frac{1}{|\Phi_\theta(x) - \Phi_\theta(y)|} = \frac{1}{|x-y|} \frac{1}{\sqrt{\xi^T G_\theta \xi}}.
\]
For a sufficiently small neighborhood of $\theta=0$ in $C^{2,\alpha}$, the norm $\|M_\theta\|$ is strictly bounded. The quadratic form satisfies:
\[
    \xi^T G_\theta \xi = 1 + 2\xi^T M_\theta \xi + |M_\theta \xi|^2 \ge 1 - 2\|M_\theta\| > 0.
\]
Because this denominator is uniformly bounded away from zero, the function $\theta \mapsto 1/\sqrt{\xi^T G_\theta \xi}$ is of class $C^\infty$. The regularity of the other terms is the same as the one done in \cite[Lemma 4.8]{DambrineLamboley}.

Finally, we need to show that, possibly in smaller neighborhoods $U\times V$, $(v_{\theta,q}, \lambda_{\theta,q})$ are the first eigenfunction and the first eigenvalue for the operator $L_{q,\Omega}$  with Dirichlet boundary conditions.
First, it is possible to check that, for all $k\geq 1$,\[
0\leq \lambda_{k,q}(\Omega_\theta)-\lambda_{k,0}(\Omega_\theta)\leq C q,
\]
since the nonlocal operator is uniformly bounded.
Then, after possibly shrinking $U$, we can assume that \[
\lambda_{2,q}(\Omega_\theta)-\lambda_{1,q}(\Omega_\theta)\geq \frac{\lambda_{2,0}(\Omega_\theta)-\lambda_{1,0}(\Omega_\theta)}{2}\geq c>0.
\]
Thus, up to possibly shrinking $V$, we can assume that the branch provided by the Dini's Theorem stays below a threshold, while every second eigenvalue stays above it. 
Therefore $\lambda_{\theta,q}=E_q(\Omega_\theta)$ is the first eigenvalue and its eigenspace is one-dimensional. 
\end{proof}

\begin{remark}\label{rem:grad g limitato}
    Being $g$ as in Theorem \ref{thm: dini} and up to consider a smaller neighborhood $U$, we can suppose $||\partial_qg||_{L^\infty}\leq M$ where $M>0$ is a positive universal constant.
\end{remark}
\begin{theorem}\label{thm:simmetric solution on ball}
There exists an universal $q^*$, such that for any $0<q\leq q^*$ the optimal function $u_q$ for the functional $E_q(B_1)$ is unique (up to change sign). In particular it is radially symmetric.
\end{theorem}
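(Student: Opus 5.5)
We plan to apply Theorem~\ref{thm: dini} with $\theta=0$, so that $\Omega_\theta=B_1$, and combine the local uniqueness~\eqref{eq:uniqueness dini} with the rotational invariance of the problem on the ball.

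First fix $q^*$ so small that $(0,q)\in U$ for $0<q\le q^*$. Let $u_q$ be any minimizer of $E_q(B_1)$. It is an $L^2$-normalized eigenfunction of $L_{q,B_1}$ with eigenvalue $\lambda=E_q(B_1)=\lambda_{1,q}(B_1)$, and $H^2$-regularity follows from elliptic estimates, since the Coulomb potential of an $L^2$ function on a bounded set is bounded. By the last part of Theorem~\ref{thm: dini}, $\lambda_{1,q}(B_1)$ is simple for small $q$, thanks to the gap estimate $\lambda_{2,q}-\lambda_{1,q}\ge c>0$. Hence the minimizer is unique up to sign; the normalization fixes the eigenfunction up to $\pm$.

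To make this rigorous rather than rely on the spectral-gap sentence, we also argue directly. The bounds $0\le\lambda_{k,q}-\lambda_{k,0}\le Cq$ give $|E_q(B_1)-\lambda_0|\le Cq$. A compactness argument then shows $\|u_q\mp v_0\|_{H^2}\to0$. Indeed, the $u_q$ are bounded in $H^1_0$, so any limit point is a normalized first eigenfunction of the Dirichlet Laplacian, that is $\pm v_0$, and the equation upgrades the convergence to $H^2$. Choosing the sign so that $u_q$ is close to $v_0$, the pair $(u_q,E_q(B_1))$ lies in $V$ for $q$ small. Then~\eqref{eq:uniqueness dini} forces $(u_q,E_q(B_1))=g(0,q)$, so $u_q$ is uniquely determined up to sign.

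For radial symmetry, take any rotation $R\in O(3)$. Since $|Rx-Ry|=|x-y|$ and $RB_1=B_1$, the function $u_q\circ R$ is again a normalized minimizer with the same energy, and $F(0,q,u_q\circ R,E_q(B_1))=0$. Moreover $\|u_q\circ R-v_0\|_{H^2}=\|u_q-v_0\|_{H^2}$ because $v_0$ is radial, so $u_q\circ R$ also lies in $V$. By~\eqref{eq:uniqueness dini}, $u_q\circ R=u_q$ for every $R$, hence $u_q$ is radial.

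The main obstacle is the compactness step, namely ensuring that every minimizer, not only the one on the Dini branch, enters the neighborhood $V$. This requires the quantitative closeness $\|u_q\mp v_0\|_{H^2}\to0$ uniformly over minimizers. We will get it from simplicity of $\lambda_0$, by decomposing $u_q=a v_0+w$ with $w\perp v_0$. The Rayleigh quotient gives $(\lambda_{2,0}-\lambda_0)\|w\|_{L^2}^2\lesssim q$, hence $\|w\|_{H^1}\to0$ and $|a|\to1$, and the equation then controls $\|w\|_{H^2}$.
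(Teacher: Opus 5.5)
Your proposal is correct and follows the paper's proof. Both use the local uniqueness \eqref{eq:uniqueness dini} at $\theta=0$, show that every minimizer lies $H^2$-close to $\pm v_0$ (hence in $V$) for small $q$, and deduce radial symmetry from rotational invariance. Where you differ, it is by refinement. Your Rayleigh-quotient decomposition $u_q=av_0+w$ is a quantitative replacement for the paper's $\Gamma$-convergence step, and it is uniform over all minimizers. Your observation that $\|u_q\circ R-v_0\|_{H^2}=\|u_q-v_0\|_{H^2}$, because $v_0$ is radial, makes explicit why $u_q\circ R$ also lies in $V$, a point the paper leaves implicit.
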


\begin{proof}
Let $u_q$ be an optimal function for the variational problem $E_q(B_1)$. Then, $u_q$ satisfies the associated Euler--Lagrange equation:
\[
    \begin{cases}
        -\Delta u_q = \lambda_q u_q - \frac{q}{2} \left( u_q \ast \frac{1}{|\cdot|} \right)  & \text{in } B_1, \\[4pt]
        u_q = 0 & \text{on } \partial B_1,
    \end{cases}
\]
where $\lambda_q=E_q(B_1)$.

Let $u_0$ be the unique, strictly positive, $L^2$-normalized first eigenfunction of the Dirichlet Laplacian on $B_1$. Subtracting the equation satisfied by $u_0$ from the one for $u_q$, and setting $w = u_q - u_0$, we obtain the PDE for the difference:
\[
    \begin{cases}
        -\Delta w = (\lambda_q - \lambda_0) u_0 + \lambda_q w - \frac{q}{2} \left( u_q \ast \frac{1}{|\cdot|} \right) & \text{in } B_1, \\[4pt]
        w = 0 & \text{on } \partial B_1.
    \end{cases}
\]

Since the function $$v\in L^2(B_1)\to \int_{B_1} \int_{B_1} \frac{v(x)v(y)}{|x-y|}\, dxdy\in \R$$ is continuous (see \cite[Proposition 2.2]{MazzoleniMoraschiRuffini}), then by $\Gamma$-convergence we know that $E_q(\cdot, B_1) \xrightarrow{\Gamma} E_0(\cdot, B_1)$ as $q \to 0^+$ in weak $H^1_0(B_1)$ topology. Then up to consider $-u_q$, it holds that $u_q \to u_0$ in $L^2(B_1)$ and $\lambda_q \to \lambda_0$.

By standard elliptic regularity theory \cite[Theorem 8.13]{GilbargTrudinger}, we obtain that $w\to 0$ in $H^2(B_1)$, so $u_q \to u_0$ strongly in $H^2(B_1)$.

Recalling the operator $F(\theta, q, v, \lambda)$ defined in \eqref{eq: funzionale del dini}, we have that $F(0, q, u_q, \lambda_q) = (0,0)^T$ for all $q>0$ small enough. Then by Theorem \ref{thm: dini}, we know there exists a neighborhood $V$ of $(u_0, \lambda_0)$ in $\left( H^2(B_1) \cap H_0^1(B_1) \right) \times \R$ such that, for $q$ small enough, the problem admits a \emph{unique} solution branch $(\tilde{u}_q, \tilde{\lambda}_q) \in V$.
Since $(u_q, \lambda_q) \to (u_0, \lambda_0)$ in $\left( H^2(B_1) \cap H_0^1(B_1) \right) \times \R$, for $q$ sufficiently small, then $(u_q, \lambda_q)\in V$ . 

Finally, since the energy on the ball $B_1$ is rotationally invariant, the uniqueness of the solution implies that the solution $u_q$ is radially symmetric. 
\end{proof}

Let $\Omega_\theta = \Phi_\theta(B_1)$ be the perturbed domain associated with the deformation $\Phi_\theta = \mathrm{Id} + \theta$. Consider the map $g: (\theta, q) \mapsto (v_{q,\theta}, \lambda_{q,\theta})$ mapping the deformation parameter and the charge to the rescaled state on $B_1$ and the eigenvalue. By Theorem~\ref{thm: dini}, $g$ is smooth on a neighborhood $U$ of $(0,0)$. 

\begin{theorem}\label{thm:convergence}
Let $\mathcal{A} \subset C^{2,\alpha}(\overline{B_1}, \mathbb{R}^3)$ be a closed and bounded neighborhood of 0 and $q^* > 0$ such that $\mathcal{A} \times [0, q^*] \subset U$. Let $u_{q,\theta}$ and $u_{0,\theta}$ be the state functions defined on $\Omega_\theta$. As $q \to 0^+$, the following limits hold:
\begin{align*}
    u_{q,\theta} &\to u_{0,\theta} \quad \text{in } C^{2,\alpha}(\Omega_\theta), \\
    \lambda_{q,\theta} &\to \lambda_{0,\theta} \quad \text{in } \mathbb{R},
\end{align*}
uniformly with respect to $\theta \in \mathcal{A}$. 
\end{theorem}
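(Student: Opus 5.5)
We exploit the smoothness of the implicit-function branch $g$ from Theorem~\ref{thm: dini} together with compactness-free uniform bounds on $\partial_q g$. Along the way, $u_{q,\theta}$ is identified with $v_{\theta,q}\circ\Phi_\theta^{-1}$, the first eigenfunction of $L_{q,\Omega_\theta}$, and $\lambda_{q,\theta}=\lambda_{\theta,q}$ is its eigenvalue.

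\emph{Step 1: the eigenvalues and the pulled-back states in $H^2(B_1)$.} Since $\mathcal A\times[0,q^*]\subset U$ and $U$ is convex in the $q$ variable (after shrinking, take $U$ of product form), write
\[
g(\theta,q)-g(\theta,0)=\int_0^q \partial_q g(\theta,s)\,ds .
\]
By Remark~\ref{rem:grad g limitato}, this yields
\[
\|v_{\theta,q}-v_{\theta,0}\|_{H^2(B_1)}+|\lambda_{\theta,q}-\lambda_{\theta,0}|\le Mq
\]
uniformly in $\theta\in\mathcal A$. This already proves the eigenvalue claim. Uniqueness in $V$ also fixes the sign, so no sign ambiguity arises.

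\emph{Step 2: upgrade to $C^{2,\alpha}$ on $B_1$.} Set $w=v_{\theta,q}-v_{\theta,0}$. It solves
\[
-\mathrm{div}(J_\theta A_\theta^TA_\theta\nabla w)=\lambda_{\theta,q}J_\theta w+(\lambda_{\theta,q}-\lambda_{\theta,0})J_\theta v_{\theta,0}-\tfrac q2 J_\theta V_\theta[v_{\theta,q}]
\]
in $B_1$, with $w=0$ on $\partial B_1$. The coefficients are $C^{1,\alpha}$ and uniformly elliptic, with bounds uniform over the bounded set $\mathcal A$. Schauder estimates (non-divergence form, after expanding the divergence, \cite[Thm.~6.6]{GilbargTrudinger}) therefore give
\[
\|w\|_{C^{2,\alpha}}\le C\bigl(\|w\|_{C^0}+\|\text{rhs}\|_{C^{0,\alpha}}\bigr).
\]
We bound the right-hand side in three pieces.
\begin{itemize}
\item By Sobolev embedding in dimension $3$, $\|w\|_{C^0}\le C\|w\|_{H^2}\le CMq$.
\item For the term $\lambda_{\theta,q}J_\theta w$, we need $\|w\|_{C^{0,\alpha}}$. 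Here $H^2\subset C^{0,1/2}$, and we may take $\alpha\le 1/2$. Otherwise we bootstrap via $W^{2,p}$ estimates: the right-hand side lies in $L^p$, which gives $w\in W^{2,p}\subset C^{1,\beta}$ with norm $O(q)$.
\item The Coulomb potential $V_\theta[v]$ is bounded in $C^{0,\alpha}$ (indeed in $C^{1,\beta}$) by $C\|v\|_{L^\infty}$. This follows since its kernel is $|x-y|^{-1}$ times a smooth bounded factor, as in the factorization in the proof of Theorem~\ref{thm: dini}, and $\|v_{\theta,q}\|_{L^\infty}$ is uniformly bounded by Step 1. This term is therefore $O(q)$.
\end{itemize}
Combining these, $\|w\|_{C^{2,\alpha}(B_1)}\le Cq$ uniformly in $\theta\in\mathcal A$.

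\emph{Step 3: push forward to $\Omega_\theta$.} We have $u_{q,\theta}-u_{0,\theta}=w\circ\Phi_\theta^{-1}$. The inverse $\Phi_\theta^{-1}$ is $C^{2,\alpha}$ with norm bounded uniformly for $\theta\in\mathcal A$, since $\|\theta\|_{C^{2,\alpha}}$ is small and bounded. Composition is bounded on $C^{2,\alpha}$ with constants depending only on these norms, which gives the claim.

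The main obstacle is Step 2's control of the nonlocal term in Hölder norms uniformly in $\theta$. The difficulty is the Hölder regularity in $y$ of $V_\theta[v](y)=\int |\Phi_\theta(y)-\Phi_\theta(z)|^{-1}v(z)J_\theta(z)\,dz$. I would handle it by the change of variables $x=\Phi_\theta(z)$. This rewrites $V_\theta[v]=(|\cdot|^{-1}*\tilde v)\circ\Phi_\theta$ with $\tilde v=(v\circ\Phi_\theta^{-1})\chi_{\Omega_\theta}\in L^\infty$. The Newtonian potential of an $L^\infty$ function with bounded support is $C^{1,\beta}$, with norm controlled by $\|\tilde v\|_{L^\infty}$ and $|\Omega_\theta|$, which avoids differentiating the distorted kernel.
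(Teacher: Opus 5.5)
Your proof is correct and follows essentially the same route as the paper: a Lipschitz-in-$q$ bound for the implicit branch $g$ from Remark~\ref{rem:grad g limitato}, then a Schauder bootstrap on the equation satisfied by the difference of the states, with the Coulomb term controlled as a Newtonian potential of the state. The only variation is where the elliptic estimates are done. You work with the pulled-back variable-coefficient equation on the fixed ball $B_1$, starting from the $H^2$ bound that $g$ already provides, and then compose with $\Phi_\theta^{-1}$; the paper instead transfers only the $L^2$ bound to $\Omega_\theta$ and applies estimates for $-\Delta$ there, using that the domains $\Omega_\theta$ are uniformly $C^{2,\alpha}$.
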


\begin{proof}
Since $\mathcal{A} \times [0, q^*]\subset U$, in light of Remark \ref{rem:grad g limitato}, the function $g$ is Lipschitz continuous with constant $M$. Thus for all $\theta \in \mathcal{A}$ and $q \in [0, q^*]$:
\begin{equation*}
    \|v_{q,\theta} - v_{0,\theta}\|_{L^2(B_1)} \le Mq, \quad 
    |\lambda_{q,\theta} - \lambda_{0,\theta}| \le Mq.
\end{equation*} 

We transfer these $L^2$ convergences to the domain $\Omega_\theta$. 
Denoting $J_\theta = \det(\nabla \Phi_\theta)$, we obtain:
\begin{equation*}
   \|u_{q,\theta} - u_{0,\theta}\|_{L^2(\Omega_\theta)}^2 = \int_{B_1} (v_{q,\theta} - v_{0,\theta})^2 J_\theta \, dy \le \|J_\theta\|_{L^\infty(B_1)} \|v_{q,\theta} - v_{0,\theta}\|_{L^2(B_1)}^2.
\end{equation*}
Since $\mathcal{A}$ is a bounded subset of $C^{2,\alpha}$, the Jacobian $J_\theta$ is uniformly bounded from above for all $\theta \in \mathcal{A}$. Therefore, $\|u_{q,\theta} - u_{0,\theta}\|_{L^2(\Omega_\theta)} \to 0$ uniformly as $q \to 0^+$. 

To establish the higher-order convergence, consider the PDE satisfied by the difference $w_q := u_{q,\theta} - u_{0,\theta}$ on $\Omega_\theta$:
\begin{equation*}
    \begin{cases}
        -\Delta w_q = \lambda_{q,\theta} w_q + (\lambda_{q,\theta} - \lambda_{0,\theta}) u_{0,\theta} - \frac{q}{2} v_{u_{q,\theta}} & \text{in } \Omega_\theta, \\
        w_q = 0 & \text{on } \partial\Omega_\theta.
    \end{cases}
\end{equation*}
Applying standard elliptic regularity (\cite[Theorem 6.6, Theorem 8.15, Theorem 8.29]{GilbargTrudinger}) on the $C^{2,\alpha}$ domain $\Omega_\theta$, the $C^{2,\alpha}(\Omega_\theta)$ norm of $w_q$ is bounded by:
\begin{equation*}
    \|w_q\|_{C^{2,\alpha}(\Omega_\theta)} \le C(\theta) \Big( \|w_q\|_{L^2(\Omega_\theta)}+ 
    |\lambda_{q,\theta} - \lambda_{0,\theta}| \|u_{0,\theta}\|_{L^\infty(\Omega_\theta)} + q \|v_{u_{q,\theta}}\|_{C^{0,\alpha}(\Omega_\theta)} \Big).
\end{equation*}
Because the domains $\Omega_\theta$ are parameterized by the set of deformations $\mathcal{A}$, the ellipticity constants $C(\theta)$ are uniformly bounded. From the previous step, $|\lambda_{q,\theta} - \lambda_{0,\theta}|$ vanish uniformly as $q \to 0^+$. 
Since $\Delta v_{u_q,\theta}=-4\pi u_{q,\theta}$, the regularity estimate of $v_{u_q,\theta}$ are inherited from the one of $u_{q,\theta}$.
\end{proof}
\begin{remark}\label{rem:positivity}
We choose the sign of \(u_{q,\theta}\) to be nonnegative. Indeed for sufficiently small deformations of the ball, the Hopf boundary estimate holds uniformly:
\[-\partial_{\nu_\theta}u_{0,\theta}\ge c>0
\qquad\text{on }\partial\Omega_\theta,\]
with \(\nu_\theta\) the outward unit normal. Uniform boundary regularity gives a fixed tubular neighborhood od $\partial\Omega_\theta$ in which \(u_{0,\theta}\) increases uniformly along inward normals, while \(u_{0,\theta}\) is uniformly bounded away from zero outside this neighborhood. The uniform \(C^1\) convergence \(u_{q,\theta}\to u_{0,\theta}\) therefore preserves positivity in both regions. Consequently, after decreasing \(q^*>0\), independently of \(\theta\) the nonnegativity of the optimal function holds for every \(q\in[0,q^*]\).
\end{remark}

\section{Convergence of the operator}

Let $\alpha\in(0,1)$ and, for $\delta>0$ small, set
\[ \mathcal A_\delta := \left\{\varphi\in C^{2,\alpha}(\partial B_1):\|\varphi\|_{C^{2,\alpha}(\partial B_1)}\leq\delta, \quad |\Omega^\varphi|=|B_1| \right\}, \]
where
\[ \partial\Omega^\varphi = \left\{(1+\varphi(x))x:x\in\partial B_1\right\}.\]
For every $\varphi\in\mathcal A_\delta$, let us set
\begin{equation}\label{eq:radial deformation of B1}
    \Omega_t^\varphi:=\{x+xt\phi(x):x\in B_1\}, \qquad t\in[0,1]. 
\end{equation}
Observe that all the previous results, obtained with the general deformation $\Phi=Id+\theta$, hold for the choice $\theta(x)=t\phi(x)x$ where $x\in\partial B_1$ and $t\in[0,1]$.
\begin{remark}
  Since \(\partial\Omega_t\) is the normal graph of \(t\phi\) over \(\partial B_1\), as in \cite[Section~4]{DambrineLamboley}, and \(\|t\phi\|_{C^{2,\alpha}(\partial B_1)}\leq\delta\), the domains \(\Omega_t\) form a uniformly \(C^{2,\alpha}\) family for \(\delta\) sufficiently small. Consequently, the constants in the trace and extension theorems, as well as the geometric constants in elliptic regularity estimates, can be chosen uniformly in \(t\) and \(\phi\). Moreover, to well define $\Omega_t$ as a deformation with a vector field, we can extend $\phi\nu$ in a neighborhood of $\partial B_1$ as $\phi\big(\frac{x}{|x|}\big)\frac{x}{|x|}$ and then smoothly extend to $\R^3$. From now on, $\phi\nu$ is meant to be the extension on the whole space. 

\end{remark}

For $q\geq 0$, let $(u_{q,t}^\varphi,\lambda_{q,t}^\varphi)$ denote the positive, $L^2$-normalized solution of 
\[ 
\begin{cases}
-\Delta u_{q,t}^\varphi = \lambda_{q,t}^\varphi u_{q,t}^\varphi -\dfrac q2 v_{u_{q,t}^\varphi} & \text{in }\Omega_t^\varphi, \\[2mm]
u_{q,t}^\varphi=0& \text{on }\partial\Omega_t^\varphi, \\[1mm]
\displaystyle
\int_{\Omega_t^\varphi}(u_{q,t}^\varphi)^2\,dx=1,
\end{cases}
\]

Given $h\in H^{1/2}(\partial\Omega_t^\varphi)$, let $u_{q,t}^{\prime,\varphi}[h]\in H^1(\Omega_t^\varphi)$ be the unique solution of
\begin{equation}
\label{eq:linearized-state-h}
\begin{cases}
-\Delta u_{q,t}^{\prime,\varphi}[h] = \lambda_{q,t}^{\prime,\varphi}[h]u_{q,t}^\varphi +\lambda_{q,t}^\varphi u_{q,t}^{\prime,\varphi}[h] -\dfrac q2v_{u_{q,t}^{\prime,\varphi}[h]} &\text{in }\Omega_t^\varphi, \\[2mm]
u_{q,t}^{\prime,\varphi}[h] = -\partial_{\nu_t}u_{q,t}^\varphi\,h &\text{on }\partial\Omega_t^\varphi, \\[2mm]
\displaystyle
\int_{\Omega_t^\varphi} u_{q,t}^\varphi u_{q,t}^{\prime,\varphi}[h]\,dx=0, 
\end{cases}
\end{equation}
with
\[ \lambda_{q,t}^{\prime,\varphi}[h] = -\int_{\partial\Omega_t^\varphi} |\nabla u_{q,t}^\varphi|^2h\,d\mathcal H^2. \]


\begin{lemma}
There exist $\delta,q_0>0$ such that, for every $\varphi\in\mathcal A_\delta$, $q\in[0,q_0]$, $t\in[0,1]$, and $h\in H^{1/2}(\partial\Omega_t^\varphi)$, problem \eqref{eq:linearized-state-h} admits a unique solution $u_{q,t}^{\prime,\varphi} [h]\in H^1(\Omega_t^\varphi)$.
\end{lemma}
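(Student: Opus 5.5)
The plan is to reduce \eqref{eq:linearized-state-h} to a homogeneous Dirichlet problem with a one-dimensional constraint and then use a Fredholm/spectral-gap argument that is uniform in $(\varphi,q,t)$.

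\textbf{Lifting the boundary datum.} Write $\Omega=\Omega_t^\varphi$, $u=u_{q,t}^\varphi$, $\lambda=\lambda_{q,t}^\varphi$, and set $g:=-\partial_{\nu_t}u\,h$. By Theorem~\ref{thm:convergence} and the uniform $C^{2,\alpha}$ character of the family $\Omega_t^\varphi$, the trace $\partial_{\nu_t}u$ is bounded in $C^{1,\alpha}(\partial\Omega)$ uniformly in $(\varphi,q,t)$. Hence multiplication by it maps $H^{1/2}(\partial\Omega)$ into itself with a uniform bound, so $g\in H^{1/2}(\partial\Omega)$. I take a bounded harmonic (or any) extension $G\in H^1(\Omega)$ of $g$, with $\|G\|_{H^1}\le C\|h\|_{H^{1/2}}$, and look for $u'=G+w$ with $w\in H^1_0(\Omega)$. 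The number $\lambda'=\lambda_{q,t}^{\prime,\varphi}[h]$ is explicitly given, so the source term $f:=\lambda' u+\Delta G+\lambda G-\frac q2 v_G\in H^{-1}(\Omega)$ is known. The problem then becomes
\[
L_{q,\Omega}w-\lambda w=f \ \text{ in } H^{-1}(\Omega),\qquad \int_\Omega uw\,dx=-\int_\Omega uG\,dx .
\]

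\textbf{Fredholm alternative.} The operator $L_{q,\Omega}=-\Delta+\frac q2 v_{(\cdot)}$ on $H^1_0(\Omega)$ is self-adjoint, since the Coulomb kernel is symmetric, and has compact resolvent, since the nonlocal part is a bounded perturbation. By the last part of Theorem~\ref{thm: dini}, $\lambda$ is its first eigenvalue, simple, with eigenfunction $u$. Moreover the gap estimate in that proof gives $\lambda_{2,q}(\Omega)-\lambda\ge c>0$ uniformly, after shrinking $\delta,q_0$. By the Fredholm alternative, $L_{q,\Omega}w-\lambda w=f$ is solvable if and only if $\langle f,u\rangle=0$, and the solution is unique modulo $\mathrm{span}\{u\}$. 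Fixing the multiple of $u$ by the orthogonality constraint then gives uniqueness of $u'$, since a difference of two solutions lies in $H^1_0$, is in the kernel, and is orthogonal to $u$.

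\textbf{Compatibility condition (the main point).} It remains to check that $\langle f,u\rangle=0$, which is exactly why $\lambda'$ has the prescribed value. Testing the equation formally against $u$ and integrating by parts twice (Green's formula), the self-adjoint parts cancel because $L_{q,\Omega}u=\lambda u$ and $u=0$ on $\partial\Omega$. What remains is
\[
\lambda'\int_\Omega u^2=\int_{\partial\Omega}u'\,\partial_\nu u=-\int_{\partial\Omega}|\partial_\nu u|^2h .
\]
Using $\int_\Omega u^2=1$ and $|\nabla u|=|\partial_\nu u|$ on $\partial\Omega$, this is the stated formula. Rigorously, I would compute $\langle f,u\rangle=\lambda'+\int_\Omega(\Delta G)u+\lambda\int_\Omega Gu-\frac q2\int_\Omega v_Gu$. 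Then I would use $\int_\Omega v_G u=\int_\Omega G v_u$ (Fubini, by symmetry of the kernel) together with Green's formula, where $u\in C^{2,\alpha}$ makes the boundary term $\int_{\partial\Omega} g\,\partial_\nu u$ meaningful as an $H^{1/2}$–$H^{-1/2}$ pairing.

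The remaining care is making the constants uniform. This uses the spectral gap, the uniform trace and extension constants from the preceding Remark, and the uniform boundedness of the Coulomb term $\|v_w\|\le C\|w\|_{L^2}$. These give $\|u'\|_{H^1}\le C\|h\|_{H^{1/2}}$ with $C$ independent of $(\varphi,q,t)$, which will be needed later.
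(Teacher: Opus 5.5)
Your proposal is correct and follows essentially the paper's route: you lift the boundary datum, use simplicity of $\lambda_{q,t}^\varphi$ and the Fredholm alternative for the self-adjoint operator $-\Delta+\frac q2 v_{(\cdot)}-\lambda_{q,t}^\varphi$ on $H^1_0$, and fix the kernel component through the orthogonality constraint. The only cosmetic difference is that the paper solves a bordered system with $\mu$ as an unknown and then identifies $\mu=\lambda_{q,t}^{\prime,\varphi}[h]$ through the same Green's-formula computation, whereas you fix $\lambda_{q,t}^{\prime,\varphi}[h]$ in advance and verify the compatibility condition $\langle f,u\rangle=0$ directly.
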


\begin{proof}
We follow the argument of \cite[Lemma~5.7.3]{henrotpierre2018}. Set
\[
\mathcal L_{q,t}^\varphi w
:=-\Delta w+\frac q2
\int_{\Omega_t^\varphi}\frac{w(y)}{|x-y|}\,dy.
\]
For $\delta$ and $q_0$ sufficiently small, the local uniqueness provided by  Theorem \ref{thm: dini} implies that $\lambda_{q,t}^\varphi$ is simple.

Given $(Z,\Lambda)\in H^{-1}(\Omega_t^\varphi)\times\mathbb R$, we first seek a pair $(w,\mu)\in H_0^1(\Omega_t^\varphi)\times\mathbb R$ satisfying
\[
\begin{cases}
(\mathcal L_{q,t}^\varphi-\lambda_{q,t}^\varphi)w
-\mu u_{q,t}^\varphi=Z,\\[2mm]
\displaystyle 2\int_{\Omega_t^\varphi}u_{q,t}^\varphi w=\Lambda.
\end{cases}
\]
Since $\mathcal L_{q,t}^\varphi$ is self-adjoint with compact resolvent and
\[
\ker(\mathcal L_{q,t}^\varphi-\lambda_{q,t}^\varphi)
=\operatorname{span}\{u_{q,t}^\varphi\},
\]
the Fredholm alternative shows that the first equation is solvable precisely when
\[
\mu=-\langle Z,u_{q,t}^\varphi\rangle.
\]
For this choice, its solutions are of the form $w=w_0+s u_{q,t}^\varphi$, and the second equation uniquely fixes
\[
s=\frac{\Lambda}{2}
-\int_{\Omega_t^\varphi}u_{q,t}^\varphi w_0.
\]
Thus the system admits a unique pair $(w,\mu)$.

Now choose a lifting $G_h\in H^1(\Omega_t^\varphi)$ of
$-\partial_{\nu_t}u_{q,t}^\varphi h$, and take
\[
Z=-(\mathcal L_{q,t}^\varphi-\lambda_{q,t}^\varphi)G_h,
\qquad
\Lambda=-2\int_{\Omega_t^\varphi}u_{q,t}^\varphi G_h.
\]
Let $(w,\mu)$ be the corresponding solution and define
\[
u_{q,t}^{\prime,\varphi}[h]:=G_h+w.
\]
Thus it holds
\[
\begin{aligned}
\mu
=-\langle Z,u_{q,t}^\varphi\rangle
=\int_{\partial\Omega_t^\varphi}
G_h\,\partial_{\nu_t}u_{q,t}^\varphi
=-\int_{\partial\Omega_t^\varphi}
|\nabla u_{q,t}^\varphi|^2h
=\lambda_{q,t}^{\prime,\varphi}[h].
\end{aligned}
\]
Hence $u_{q,t}^{\prime,\varphi}[h]$ solves \eqref{eq:linearized-state-h}. Uniqueness follows from that of the lifted system.
\end{proof}

The implicit function theorem provides smooth dependence in the topology of regular domain deformations. For the stability argument, however, we need estimates in the weaker boundary space $H^{1/2}$. We therefore study the linearized state for arbitrary $h\in H^{1/2}(\partial\Omega_t^\varphi)$, with constants uniform in both the domain and the $q$.
\begin{lemma}\label{le:a_q coerciva e stime}
There exist $\delta_1,q_1>0$ such that  that for all $\phi \in \mathcal{A}_{\delta_1}$, $t\in[0,1]$ and $q\in [0,q_1]$ it holds \begin{equation}\label{eq:stime u_q'}
        || u_{q,t}^{\prime,\varphi}[h]||_{H^1(\Omega_t^\varphi)}\leq C ||h||_{H^{1/2}(\partial \Omega_t^\varphi)},  
    \end{equation}
        \begin{equation}\label{eq:stime u'_q-u'_0}
       || u_{q,t}^{\prime,\varphi}[h]- u_{0,t}^{\prime,\varphi}[h]||_{H^1(\Omega_t^\varphi)}\leq o_q(1)||h||_{H^{1/2}(\partial \Omega_t^\varphi)}.
    \end{equation}
    for all $h\in H^{1/2}(\partial\Omega_t^\phi)$.
\end{lemma}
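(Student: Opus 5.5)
We reuse the lifting decomposition from the previous lemma, $u_{q,t}^{\prime,\varphi}[h]=G_h+w_q$, and make every step quantitative with constants uniform in $\varphi\in\mathcal A_{\delta_1}$, $t\in[0,1]$ and $q\in[0,q_1]$.

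\textbf{The lifting.} We choose the lifting through a fixed, $q$-independent construction: $G_h$ is the harmonic extension of the boundary datum $g_q:=-\partial_{\nu_t}u_{q,t}^\varphi\,h$. The domains $\Omega_t^\varphi$ form a uniformly $C^{2,\alpha}$ family, so the trace and extension constants are uniform. By Theorem~\ref{thm:convergence}, $\partial_{\nu_t}u_{q,t}^\varphi$ is bounded in $C^{1,\alpha}(\partial\Omega_t^\varphi)$ uniformly in all parameters. Multiplication by a $C^{1,\alpha}$ function is bounded on $H^{1/2}$, so
\[
\|G_h\|_{H^1}\le C\|h\|_{H^{1/2}}.
\]
For the same reason,
\[
\|G_h^q-G_h^0\|_{H^1}\le C\|\partial_{\nu_t}u_{q,t}^\varphi-\partial_{\nu_t}u_{0,t}^\varphi\|_{C^{1}(\partial\Omega_t^\varphi)}\|h\|_{H^{1/2}},
\]
and the first factor is $o_q(1)$ uniformly, again by Theorem~\ref{thm:convergence}.

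\textbf{The bordered operator.} Write
\[
T_q(w,\mu):=\bigl((\mathcal L_{q,t}^\varphi-\lambda_{q,t}^\varphi)w-\mu u_{q,t}^\varphi,\;2\textstyle\int u_{q,t}^\varphi w\bigr),
\]
viewed as a map $H^1_0\times\R\to H^{-1}\times\R$. We claim $T_q^{-1}$ is bounded uniformly in all parameters. The nonlocal term $w\mapsto \frac q2\int w(y)|x-y|^{-1}dy$ has $\mathcal L(H_0^1,H^{-1})$ norm at most $Cq$: the Coulomb potential maps $L^2$ to $L^\infty$ on bounded sets. Together with $|\lambda_{q,t}^\varphi-\lambda_{0,t}^\varphi|\le Mq$ and $\|u_{q,t}^\varphi-u_{0,t}^\varphi\|_{L^2}\le Cq$, this gives $\|T_q-T_0\|\le Cq$. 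It therefore suffices to bound $T_0^{-1}$ uniformly over $\varphi,t$, and this is the main obstacle. It rests on a uniform spectral gap $\lambda_{2,0}(\Omega_t^\varphi)-\lambda_{1,0}(\Omega_t^\varphi)\ge c>0$, which follows from continuity of Dirichlet eigenvalues under $C^{2,\alpha}$-small perturbations of $B_1$ (as used at the end of the proof of Theorem~\ref{thm: dini}). We argue as follows:
\begin{itemize}
\item On $(u_{0,t}^\varphi)^\perp$, coercivity of $-\Delta-\lambda_{1,0}$ gives $\langle(-\Delta-\lambda_{1,0})w,w\rangle\ge \frac{c}{\lambda_2}\|\nabla w\|^2$.
\item Decompose $w=w^\perp+s\,u_{0,t}^\varphi$. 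Then $s$ is fixed by $\Lambda$, $\mu$ by testing with $u_{0,t}^\varphi$, and $w^\perp$ by coercivity.
\item This yields $\|(w,\mu)\|\le C\|(Z,\Lambda)\|$ with $C$ depending only on the gap and on uniform Poincar\'e constants.
\end{itemize}
A Neumann series then gives $\|T_q^{-1}\|\le 2C$ for $q\le q_1$. Alternatively, a contradiction/compactness argument over sequences $(\varphi_n,t_n,q_n)$, pulled back to $B_1$, would give the same bound.

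\textbf{Conclusion.} For \eqref{eq:stime u_q'}, apply the uniform bound on $T_q^{-1}$ with data
\[
Z=-(\mathcal L_{q,t}^\varphi-\lambda_{q,t}^\varphi)G_h,\qquad \Lambda=-2\int u_{q,t}^\varphi G_h,
\]
both bounded by $C\|G_h\|_{H^1}\le C\|h\|_{H^{1/2}}$. For \eqref{eq:stime u'_q-u'_0}, write
\[
w_q-w_0=T_q^{-1}(Z_q,\Lambda_q)-T_0^{-1}(Z_0,\Lambda_0)=T_q^{-1}\bigl[(Z_q,\Lambda_q)-(Z_0,\Lambda_0)\bigr]+(T_q^{-1}-T_0^{-1})(Z_0,\Lambda_0).
\]
We have $\|T_q^{-1}-T_0^{-1}\|\le\|T_q^{-1}\|\|T_q-T_0\|\|T_0^{-1}\|\le Cq$. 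The data difference is bounded by
\[
C\bigl(q+\|G^q_h-G^0_h\|_{H^1}\bigr)\|h\|_{H^{1/2}}=o_q(1)\|h\|_{H^{1/2}}.
\]
Adding the estimate on $G^q_h-G^0_h$ gives \eqref{eq:stime u'_q-u'_0}. All $o_q(1)$ terms are uniform in $\varphi,t$ because the rates from Remark~\ref{rem:grad g limitato} and Theorem~\ref{thm:convergence} are uniform.
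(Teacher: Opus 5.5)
Your proof is correct, but it is organized differently from the paper's. The paper argues directly at each fixed $q$. It proves that $a_q(v,v)=E_q(v,\Omega_t^\varphi)-\lambda_{q,t}^\varphi\|v\|_{L^2}^2$ is coercive on $V_q=\{v\in H^1_0(\Omega_t^\varphi):(v,u_{q,t}^\varphi)_{L^2}=0\}$ uniformly in $(\varphi,t,q)$. This needs a min--max comparison $0\le\lambda_{2,q}-\lambda_{2,0}\le Cq$ for the nonlocal operator, plus domain monotonicity against $B_{1\pm\delta}$. It then splits $u'_q[h]$ into a lifting projected orthogonally to $u_{q,t}^\varphi$ plus $y\in V_q$, and tests the equation against $y$. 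For the second estimate it writes the PDE for $u'_q[h]-u'_0[h]$ with the operator $-\Delta-\lambda_0$ and a source $f_{q,t}$ of size $o_q(1)\|h\|_{H^{1/2}}$. It then reruns the coercivity argument in $V_0$, controlling the component along $u_{0,t}^\varphi$ separately via $(u'_q[h],u_{q,t}^\varphi)_{L^2}=0$.

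You instead use the spectral gap only at $q=0$ to bound $T_0^{-1}$ uniformly. You then absorb the Coulomb term and the shifts of $\lambda$ and $u$ as an $O(q)$ perturbation in $\mathcal L(H^1_0\times\R,H^{-1}\times\R)$. This buys three things. First, no spectral analysis of the nonlocal operator is needed; invertibility of $T_q$ even implies simplicity of $\lambda_{q,t}^\varphi$. Second, the difference estimate becomes purely algebraic through $T_q^{-1}-T_0^{-1}=T_q^{-1}(T_0-T_q)T_0^{-1}$. Third, you get an explicit rate $O(q)+O(\|\partial_{\nu_t}u_{q,t}^\varphi-\partial_{\nu_t}u_{0,t}^\varphi\|_{C^1(\partial\Omega_t^\varphi)})$.

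The paper's energy argument, conversely, avoids any perturbation step: it works at every $q$ for which the gap $\lambda_{2,q}-\lambda_{1,q}$ is known to stay bounded below. Both routes rest on the same inputs: the uniform gap, uniform lifting and $H^{1/2}$-multiplier bounds, and Theorem~\ref{thm:convergence}. Fixing the harmonic extension as a $q$-independent linear lifting is legitimate, since the solution of the linearized problem does not depend on the lifting.

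One small slip: in your data-difference bound, the factor multiplying $\|h\|_{H^{1/2}}$ should be $C(q+\|\partial_{\nu_t}u_{q,t}^\varphi-\partial_{\nu_t}u_{0,t}^\varphi\|_{C^1})$. As written, the bound contains $\|G^q_h-G^0_h\|_{H^1}$ itself, which would make it quadratic in $h$.
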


\begin{proof}
    Let us define \[V_q(\Omega_t^\varphi)=\{v:v\in H_0^1(\Omega_t^\varphi),  (v,u_q)_{L^2(\Omega_t^\varphi)}=0\}.\]
    Moreover we set 
    \[ \lambda_{2,q}(\Omega_t^\varphi)= \min_{v\in V_q(\Omega_t^\varphi)} \frac{E_q(v,\Omega_t^\varphi)}{||v||^2_{L^2(\Omega_t^\varphi)}}.\]
    Setting, for $v\in V_q(\Omega_t^\varphi)$, $a_q(v,v)=E_q(v,\Omega_t^\varphi)-\lambda_q||v||^2_{L^2(\Omega_t^\varphi)}$, then \[ a_q(v,v)\geq (\lambda_{2,q}(\Omega_t^\varphi)-\lambda_q(\Omega_t^\varphi))||v||^2_{L^2 (\Omega_t^\varphi)}\qquad \forall v\in V_q(\Omega_t^\varphi).\]
Moreover $\lambda_{2,q}(\Omega_t^\varphi)$ and $\lambda_q(\Omega_t^\varphi)$ converge uniformly, with respect to $t\in[0,1]$ and $\phi\in \mathcal{A}_{\delta_0}$, to $\lambda_2(\Omega_t^\varphi)$ and $\lambda_0(\Omega_t^\varphi)$. Indeed the convergence $\lambda_q(\Omega_t^\phi)$ follows from Theorem \ref{thm:convergence}. Meanwhile, since $(\Delta+qv_u)$ has compact and self-adjoint resolvent, so there exists a orthonormal base composed by eigenfunction of $L^2(\Omega_t^\phi)$. This implies that

\[  \lambda_{2,q}(\Omega_t^\phi)
=\inf_{\substack{S\subset H_0^1(\Omega_t^\phi)\\ \dim S=2}}
\sup_{\substack{v\in S\\ \|v\|_{L^2(\Omega_t^\phi)}=1}}E_q(v).\]
Moreover \[
0\le D_{\Omega^\phi_t}(v,v)\le2\int_{\Omega_t^\phi}\int_{\Omega_t^\phi} 
\frac{v(x)^2+v(y)^2}{|x-y|}\le C(|\Omega_t^\phi|)\|v\|_{L^2(\Omega_t^\phi)}^2
\]
Choosing $\delta\leq \frac{1}{2}$, we obtain that $|B_{1/2}|\leq|\Omega_t^\phi|\leq |B_{3/2}|$ and so applying the min--max principle gives
\[
0\le\lambda_{2,q}(\Omega_t^\phi)-\lambda_{2,0}(\Omega_t^\phi)
\le Cq.
\]

    Since $\phi\in \mathcal{A}_{\delta_0}$, then $\lambda_0(\Omega_t^\varphi)\leq \lambda_0(B_{1-\delta_0})$ and $\lambda_{2,0}(\Omega_t^\varphi)\geq \lambda_{2,0}(B_{1+\delta_0})$. So, up to choosing $\delta_0$ and $q_0$ small enough, 
\begin{equation}\label{eq:L^2 coercivity}
        a_q(v,v)\geq \lambda_{2,q}(\Omega_t^\varphi)-\lambda_q(\Omega_t^\varphi)||v||^2_{L^2(\Omega_t^\phi)}\geq \frac{\lambda_{2,0}(B_1)-\lambda_0(B_1)}{2}||v||^2_{L^2(\Omega_t^\phi)}=C||v||^2_{L^2(\Omega_t^\phi)},
\end{equation}
    Since $\lambda_q(\Omega_t^\phi)\leq C$ for all $\phi\in\mathcal{A}_{\delta_0}$ due to Theorem \ref{thm: dini}, then
\begin{equation}\label{eq:gradient L^2 coercivity}
         ||\nabla v||^2_{L^2(\Omega_t^\phi)}\leq a_q(v,v)+\lambda_q(\Omega_t^\phi)||v||^2_{L^2(\Omega_t^\phi)}\leq (1+C)a_q(v,v). 
\end{equation}
    Thus by \eqref{eq:L^2 coercivity} and \eqref{eq:gradient L^2 coercivity}, $a_q$ is coercive with universal costant $c^*$. 
    Let us now prove \eqref{eq:stime u_q'}.  Let $E:H^{1/2}(\partial\Omega_t^\varphi)\to H^1(\Omega_t^\varphi)$ be a lifting operator and let $y\in H^1_0(\Omega_t^\varphi)$ be such that
    \[ u_{q,t}^{\prime,\varphi}[h]= \widetilde{E}_q[h]+y.\]
    where $\widetilde{E}_q[h]=E(-\partial_{\nu}u_{q}\,h)-(E(-\partial_{\nu}u_{q}\,h),u_q)_{L^2(\Omega_t^\varphi)}u_q$.
    By construction $(u_{q,t}^{\varphi},y)_{L^2(\Omega_t^\varphi)}=0$, and so 
    \[0=a_q(u_{q,t}^{\prime,\varphi}[h],y)=a_q(u_{q,t}^{\prime,\varphi}[h]-y,y)+a_q(y,y).\]
    Then by coercivity, estimate on the lifting operator and continuity of the multiplication map $C^{1,\alpha}(\partial  \Omega_t^\varphi)\times H^{1/2}(\partial \Omega_t^\varphi)\to  H^{1/2}(\partial \Omega_t^\varphi)$ (see \cite[Lemma 5.3]{DiNezzaPalatucciValdinoci}) it holds
    \[ c^*||y||^2_{H^1(\Omega_t^\varphi)}\leq a_q(y,y)=-a(\widetilde{E}_q[h],y)\leq||\widetilde{E}_q[h]||_{H^1(\Omega_t^\varphi)}||y||_{H^1(\Omega_t^\varphi)}\leq  C||h||_{H^{1/2}(\partial\Omega_t^\varphi)}||y||_{H^1(\Omega_t^\varphi)}.\]
    Finally
    \[||u_{q,t}^{\prime,\varphi}[h]||_{H^1(\Omega_t^\varphi)}=||\widetilde{E}_q[h]+y||_{H^1(\Omega_t^\varphi)}\leq ||\widetilde{E}_q[h]||_{H^1(\Omega_t^\varphi)}+||y||_{H^1(\Omega_t^\varphi)} \leq C||h||_{H^{1/2}(\partial\Omega_t^\varphi)}.\]
    Same computations holds for \eqref{eq:stime u'_q-u'_0} by studying the operator $(-\Delta-\lambda_0)$ and letting $y\in H^1_0(\Omega_t^\varphi)$ such that
\begin{equation}\label{eq:definizione y in u'_q-u'_0}
        u_{q,t}^{\prime,\varphi}[h]-u_{0,t}^{\prime,\varphi}[h]= \widetilde{E}_0[h]+(u_{q,t}^{\prime,\varphi}[h],u_0)_{L^2(\Omega_t^\varphi)}u_0+y,
\end{equation}
    where in this case
    \[\widetilde{E}_q[h]=E(-(\partial_{\nu}u_{q}-\partial_{\nu}u_{0})\,h)-(E(-(\partial_{\nu}u_{q}-\partial_{\nu}u_{0})\,h),u_0)_{L^2(\Omega_t^\varphi)}u_0.\]
    
  Indeed letting $w:=u_{q,t}^{\prime,\varphi}[h]- u_{0,t}^{\prime,\varphi}[h]$, it satifies
\[    \begin{cases}
        (-\Delta-\lambda_0)w=f_{q,t}&\text{in}\,  \Omega^\phi_t\\
        w=-\big(\frac{\partial u_q}{\partial \nu_t}-\frac{\partial u_0}{\partial \nu_t}\big)h & \text{on} \, \partial\Omega_t^\phi\\
        \int_{\Omega_t^\phi}u_{q,t}^{\prime,\varphi}[h]u_q=0 \\
   \int_{\Omega_t^\phi}u_{0,t}^{\prime,\varphi}[h]u_0=0 
    \end{cases}
    \]
    where \[ f_{q,t}= \lambda_{q,t}^{\prime,\varphi}[h](u_{q,t}^\varphi-u_{0,t}^\varphi)+( \lambda_{q,t}^{\prime,\varphi}[h]- \lambda_{0,t}^{\prime,\varphi}[h])u_{0,t}^\varphi+(\lambda_q(\Omega^\phi_t)-\lambda_0(\Omega^\phi_t))u_{q,t}^{\prime,\varphi}[h]-\frac{q}{2}v_{u_{q,t}^{\prime,\varphi}[h]}.\]
    Moreover by \eqref{eq:stime u_q'} and Theorem \ref{thm:convergence}, it holds
    \[ ||f_{q,t}||_{L^2(\Omega_t^\phi)}\leq o_q(1)||h||_{H^{1/2}(\partial \Omega_t^\phi)}. \]
    Recalling the definition of $y$ in \eqref{eq:definizione y in u'_q-u'_0}, $y\in V_0$ and so it holds
    \begin{align}
        C||y||^2_{H^1(\Omega_t^\phi)}&\leq a_0(y,y)=-a(w-y,y)+\langle f_{q,t},y\rangle \\&\leq||\widetilde{E}_q[h]||_{H^1(\Omega_t^\varphi)}||y||_{H^1(\Omega_t^\varphi)}+||f_{q,t}||_{L^2(\Omega_t^\phi)}||y||_{H^1(\Omega_t^\varphi)}\\&\leq  o_q(1)||h||_{H^{1/2}(\partial\Omega_t^\varphi)}||y||_{H^1(\Omega_t^\varphi)}.
    \end{align}
    where we use that $||\widetilde{E}_q[h]||_{H^1(\Omega_t^\varphi)}\leq o_q(1)||h||_{H^{1/2}(\Omega_t^\phi)}$ since $\partial_{\nu}u_{q}-\partial_{\nu}u_{0}$ converges to $0$ in $C^{1,\alpha}(\Omega_t)$. Since $(u_{q,t}^{\prime,\varphi}[h],u_q)=0$, by \eqref{eq:stime u_q'} and Theorem \ref{thm:convergence} it holds
    \begin{align}
        |(u_{q,t}^{\prime,\varphi}[h],u_0)u_0|=|(u_{q,t}^{\prime,\varphi}[h],u_q-u_0)u_0|\leq C \|h\|_{H^{1/2}(\Omega_t^\varphi)}\|u_q-u_0\|_{L^2(\Omega_t^\varphi)}\leq o_q(1)\|h\|_{H^{1/2}(\Omega_t^\varphi)}.
    \end{align}
    and then we conclude as before.
    \end{proof}

Define the symmetric bilinear form $\mathcal Q_{q,t}^\varphi$ on $H^{1/2}(\partial\Omega_t^\varphi)$ by 
\begin{align}\label{eq:bilinear-second-variation}
\mathcal Q_{q,t}^\varphi(h,k):=& \,2\int_{\Omega_t^\varphi} \nabla u_{q,t}^{\prime,\varphi}[h] \cdot \nabla u_{q,t}^{\prime,\varphi}[k]\,dx-2\lambda_{q,t}^\varphi \int_{\Omega_t^\varphi} u_{q,t}^{\prime,\varphi}[h] u_{q,t}^{\prime,\varphi}[k]\,dx\\
&+q\int_{\Omega_t^\varphi} v_{u_{q,t}^{\prime,\varphi}[h]} u_{q,t}^{\prime,\varphi}[k]\,dx+ 2\int_{\partial\Omega_t^\varphi} \left((H_t-\beta_t)|\nabla u_{q,t}^\varphi|^2+\frac q2 v_{u_{q,t}^\varphi} |\nabla u_{q,t}^\varphi| -\nabla^2u_{q,t}^{\varphi}[\nabla u_{q,t}^{\varphi}]\theta_t \right)hk\,d\mathcal H^2
 \end{align}
where \[\theta_t=\frac{|x|}{x\cdot\nu_t}\Bigg(\frac{x}{|x|}-\bigg(\frac{x}{|x|}\cdot\nu_t\bigg)\nu_t\Bigg) \qquad \beta_t:=\frac{1}{x\cdot\nu_t}.\] 
Let
\[
\mathcal L_{q,t}^\varphi \in \mathcal L \left(H^{1/2}(\partial\Omega_t^\varphi), H^{-1/2}(\partial\Omega_t^\varphi) \right) \]
be the operator associated with this bilinear form, namely
\[
\left\langle \mathcal L_{q,t}^\varphi h,k \right\rangle_ {H^{1/2}(\partial\Omega_t^\varphi),H^{-1/2}(\partial\Omega_t^\varphi)} = \mathcal Q_{q,t}^\varphi(h,k).
\]

\begin{lemma}\label{lem:convergence operator}
For all $q\in [0,q_1]$ it holds
\begin{equation}
\label{eq:uniform-operator-convergence}
\sup_{\varphi\in\mathcal A_{\delta_1}} \sup_{t\in[0,1]} \left\| \mathcal L_{q,t}^\varphi-\mathcal L_{0,t}^\varphi\right\|_{\mathcal L\left(H^{1/2}(\partial\Omega_t^\varphi),H^{-1/2}(\partial\Omega_t^\varphi)\right)}\leq o_q(1).
\end{equation}
\end{lemma}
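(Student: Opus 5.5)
The operator norm is the supremum of $|\mathcal Q_{q,t}^\varphi(h,k)-\mathcal Q_{0,t}^\varphi(h,k)|$ over $h,k$ in the unit ball of $H^{1/2}(\partial\Omega_t^\varphi)$. So it suffices to show
\[
|\mathcal Q_{q,t}^\varphi(h,k)-\mathcal Q_{0,t}^\varphi(h,k)|\le o_q(1)\,\|h\|_{H^{1/2}(\partial\Omega_t^\varphi)}\|k\|_{H^{1/2}(\partial\Omega_t^\varphi)},
\]
with $o_q(1)$ independent of $\varphi\in\mathcal A_{\delta_1}$, $t\in[0,1]$, $h$ and $k$. I split the difference according to the four groups of terms in \eqref{eq:bilinear-second-variation}. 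Throughout I use the uniform constants of Lemma~\ref{le:a_q coerciva e stime}, and the uniform $C^{2,\alpha}$ convergence $u_{q,t}^\varphi\to u_{0,t}^\varphi$, $\lambda_{q,t}^\varphi\to\lambda_{0,t}^\varphi$ from Theorem~\ref{thm:convergence}.

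\emph{Volume terms.} For the Dirichlet term, write $u'_q=u_{q,t}^{\prime,\varphi}$ for brevity and use the bilinear splitting
\[
\nabla u'_q[h]\cdot\nabla u'_q[k]-\nabla u'_0[h]\cdot\nabla u'_0[k]=\nabla(u'_q[h]-u'_0[h])\cdot\nabla u'_q[k]+\nabla u'_0[h]\cdot\nabla(u'_q[k]-u'_0[k]).
\]
By Cauchy--Schwarz, \eqref{eq:stime u_q'} and \eqref{eq:stime u'_q-u'_0}, this term is bounded by $o_q(1)\|h\|\|k\|$. The $L^2$ term is treated the same way, with the extra piece $|\lambda_{q,t}^\varphi-\lambda_{0,t}^\varphi|\,\|u'_q[h]\|_{L^2}\|u'_q[k]\|_{L^2}\le Cq\|h\|\|k\|$. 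For the Coulomb term, which vanishes at $q=0$, I use $\|v_w\|_{L^2(\Omega)}\le C(|\Omega|)\|w\|_{L^2(\Omega)}$ (Young's inequality with the locally integrable kernel $|x|^{-1}$ on a bounded set). This gives $q\,|\int v_{u'_q[h]}u'_q[k]|\le Cq\|h\|\|k\|$ by \eqref{eq:stime u_q'}.

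\emph{Boundary terms.} Denote the boundary weight by
\[
\rho_q:=(H_t-\beta_t)|\nabla u_{q,t}^\varphi|^2+\tfrac q2 v_{u_{q,t}^\varphi}|\nabla u_{q,t}^\varphi|-\nabla^2u_{q,t}^\varphi[\nabla u_{q,t}^\varphi]\theta_t ,
\]
so that the difference of the boundary terms is $2\int_{\partial\Omega_t^\varphi}(\rho_q-\rho_0)hk$. The geometric quantities $H_t,\beta_t,\theta_t$ do not depend on $q$ and are uniformly bounded in $C^{0,\alpha}$, since $\Omega_t^\varphi$ is a uniformly $C^{2,\alpha}$ family. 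I bound $\|\rho_q-\rho_0\|$ in the norm of a multiplier space acting from $H^{1/2}$ to $H^{-1/2}$ on the two-dimensional surface $\partial\Omega_t^\varphi$.

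The simplest route uses the Sobolev embedding $H^{1/2}(\partial\Omega)\hookrightarrow L^4(\partial\Omega)$, valid in dimension two with uniform constants. It gives
\[
\Big|\int_{\partial\Omega_t^\varphi}(\rho_q-\rho_0)hk\Big|\le C\|\rho_q-\rho_0\|_{L^\infty(\partial\Omega_t^\varphi)}\|h\|_{L^2}\|k\|_{L^2},
\]
so an $L^\infty$ bound on the weight suffices. The differences $|\nabla u_q|^2-|\nabla u_0|^2$ and $\nabla^2u_q[\nabla u_q]-\nabla^2u_0[\nabla u_0]$ tend to $0$ in $C^{0}$ uniformly, by the $C^{2,\alpha}$ convergence of Theorem~\ref{thm:convergence}. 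The term $\frac q2 v_{u_q}|\nabla u_q|$ is $O(q)$ in $L^\infty$, because $v_{u_q}$ is uniformly bounded: $u_q$ is bounded and $|x|^{-1}$ is locally integrable. Hence $\|\rho_q-\rho_0\|_{L^\infty}=o_q(1)$ uniformly in $\varphi$ and $t$.

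\emph{Main obstacle.} The delicate point is uniformity. Theorem~\ref{thm:convergence} is stated on each $\Omega_\theta$, with constants that must hold simultaneously for all $\theta=t\varphi\nu$ with $\varphi\in\mathcal A_{\delta_1}$. The same is true of the trace, lifting and Sobolev constants on the moving surfaces $\partial\Omega_t^\varphi$. I would settle this by choosing $\delta_1$ so that the closed set $\{t\varphi\nu\}$ lies inside the set $\mathcal A$ of Theorem~\ref{thm:convergence}. I would also pull all boundary integrals back to $\partial B_1$ via $\Phi_\theta$, whose Jacobians are uniformly close to $1$ in $C^{1,\alpha}$. This turns the moving-surface norms into uniformly equivalent norms on $\partial B_1$.

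Summing the four contributions then gives \eqref{eq:uniform-operator-convergence}.
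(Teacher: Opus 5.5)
Your proof is correct and follows essentially the paper's route: the volume terms are controlled by bilinear splitting together with the two estimates of Lemma~\ref{le:a_q coerciva e stime} (the paper groups them as $2a_q(u'_q[h],u'_q[k])$ and splits the difference into $(a_q-a_0)(u'_q[h],u'_q[k])$ plus two $a_0$-terms), and the boundary weights are controlled by the uniform $C^{2,\alpha}$ convergence of Theorem~\ref{thm:convergence}. The $H^{1/2}\hookrightarrow L^4$ embedding you invoke is not needed, since with an $L^\infty$ weight H\"older and $\|h\|_{L^2}\le\|h\|_{H^{1/2}}$ already suffice, and your treatment of uniformity in $\varphi$ and $t$ is more explicit than the paper's.
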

\begin{proof}
For notational simplicity, throughout the remainder of the proof
we fix $\varphi$ and $t$ and write
\[ \Omega:=\Omega_t^\varphi, \qquad u_q:=u_{q,t}^\varphi, \qquad \lambda_q:=\lambda_{q,t}^\varphi, \qquad u'_q[h]:= u_{q,t}^{\prime,\varphi}[h],\qquad \theta=\theta_t,\qquad \beta=\beta_t. \]
Observe that $\mathcal{Q}_q(h,k)=2a_q(u'_q[h],u'_q[k])+2\int_{\partial\Omega}((H-\beta)|\nabla u_q|^2+\frac{q}{2}v_{u_q}|\nabla u_q|-\nabla^2u_q[\nabla u_q]\theta)\, hk \,\,d\mathcal{H}^2$.
Moveover by Lemma \ref{le:a_q coerciva e stime},
\begin{align}
    |a_q(u'_q[h],u'_q[k])-&a_0(u'_0[h],u'_0[k])|\\&\leq|(a_q-a_0)(u'_q[h],u'_q[k])|+|a_0(u'_q[h]-u'_0[h],u'_q[k])|+|a_0(u'_0[h],u'_q[k]-u'_0[k])|
    \\& \leq o_q(1)||h||_{H^{1/2}(\partial \Omega)}||k||_{H^{1/2}(\partial \Omega)} 
\end{align}
and also by Theorem \ref{thm:convergence}
\begin{align}
\Bigg|\int_{\partial\Omega}\Big((H-\beta)|\nabla u_q|^2+\frac{q}{2}v_{u_q}|\nabla u_q|-\nabla^2u_{q}[\nabla u_{q}]\theta_t \Big)\, hk \,\,d\mathcal{H}^2-\int_{\partial\Omega}&\Big((H-\beta)|\nabla u_0|^2-\nabla^2u_{0}[\nabla u_{0}]\theta_t \Big)\, hk \,\,d\mathcal{H}^2\Bigg|\\&\leq o_q(1)||h||_{H^{1/2}(\partial \Omega)}||k||_{H^{1/2}(\partial \Omega)} 
\end{align}
\end{proof}

\begin{theorem}[First and Second Shape Derivatives]\label{thm:shape_derivatives}
Let $q<q^*$ (see Theorem~\ref{thm:convergence}) and $u_{q,t}^\varphi$ be the unique positive solution of $E_q(\Omega_t)$. The first and second shape derivatives of the eigenvalue $\lambda_{q,t}$ with respect to the deformation parameter $t$ are given by:
\begin{equation}\label{eq:first_derivative}
    \lambda_q'(t) = - \int_{\partial \Omega_t} |\nabla u_{q,t}^\varphi|^2 (\phi\nu \cdot \nu_t) \, d\mathcal{H}^2
\end{equation}
and
\begin{align} \label{eq:second_derivative}
    \lambda_q''(t) =& 2 \int_{\Omega_t} |\nabla u_{q,t}^{\prime,\varphi}|^2 \, dx - 2 \lambda_{q.t} \int_{\Omega_t} (u_{q,t}^{\prime,\varphi})^2 \, dx + q \int_{\Omega_t} v_{u_{q,t}^{\prime,\varphi}} u_{q,t}^{\prime,\varphi} \, dx -2\int_{\partial \Omega_t} (\phi\nu\cdot \nu_t)\nabla^2u_{q,t}^\varphi[\nabla u_{q,t}^\varphi]\cdot \phi\nu^\tau\\ & + 2\int_{\partial \Omega_t} \left( H_t |\nabla u_{q,t}^\varphi|^2 + \frac{q}{2} v_{u_{q,t}^\varphi} |\nabla u_{q,t}^\varphi| \right) (\phi\nu \cdot \nu_t)^2 \, d\mathcal{H}^2-\int_{\partial \Omega_t}|\nabla u_{q,t}^\varphi|^2\text{div}(\phi\nu)(\phi\nu\cdot \nu_t),
\end{align}
where $\nu_t$ is the outward unit normal to $\partial \Omega_t$, $H_t = \mathrm{div}_{\Gamma} \nu_t$ is the mean curvature of the boundary, and $u_{q,t}^{\prime,\varphi} =u_{q,t}^{\prime,\varphi} [\phi\nu]$ i.e. the unique solution of:
\begin{equation}\label{eq:shape_derivative_pde}
    \begin{cases}
        -\Delta u_{q,t}^{\prime,\varphi} = \lambda'_q(t) u_{q,t}^\varphi + \lambda_{q,t} u_{q,t}^{\prime,\varphi} - \frac{q}{2} v_{u_{q,t}^{\prime,\varphi}} & \text{in } \Omega_t, \\[4pt]
        u_{q,t}^{\prime,\varphi} = - \frac{\partial u_{q,t}^\varphi}{\partial \nu_t} \big(\frac{x}{|x|} \cdot \nu_t\big)\phi & \text{on } \partial \Omega_t.\\
        \int_{\Omega_t}u_{q,t}^{\prime,\varphi}u_{q,t}^\varphi = 0
    \end{cases}
\end{equation}
\end{theorem}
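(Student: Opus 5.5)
The plan is to obtain both formulas by differentiating along the flow $t\mapsto\Omega_t=\Phi_t(B_1)$ with $\Phi_t=\mathrm{Id}+t\,\phi\nu$ (the extended field $V:=\phi\nu$ of the remark above). Differentiability in $t$ of $\lambda_{q,t}$ and of the pulled-back state $v_{q,t}=u_{q,t}^\varphi\circ\Phi_t$ is given by Theorem~\ref{thm: dini}, since $t\mapsto\theta=tV$ is smooth into $C^{2,\alpha}$. Positivity, and hence the choice of branch, comes from Remark~\ref{rem:positivity}.

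\emph{First derivative.} We work with the Eulerian derivative $u'=\dot v\circ\Phi_t^{-1}-\nabla u\cdot V$, where $u:=u_{q,t}^\varphi$. Differentiating $u(\Phi_s(x))=0$ on $\partial B_1$ gives the boundary condition $u'=-\partial_{\nu_t}u\,(V\cdot\nu_t)$, because $\nabla u=(\partial_{\nu_t}u)\nu_t$ on $\partial\Omega_t$; this is exactly \eqref{eq:shape_derivative_pde}. Differentiating the equation in the interior yields the interior PDE. For the nonlocal term, the derivative of $\int_{\Omega}u(y)|x-y|^{-1}dy$ produces $v_{u'}$ plus a boundary integral of $u$, and that boundary integral vanishes since $u=0$ on $\partial\Omega_t$. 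Differentiating the normalization $\int u^2=1$ gives $\int uu'=0$, again because $u^2=0$ on the boundary. Next we test the PDE for $u'$ against $u$ and integrate by parts twice. The Coulomb terms cancel by the symmetry of the kernel, and so do the $\lambda u'$ terms, leaving
\[\lambda'=\int_{\partial\Omega_t}u'\,\partial_{\nu_t}u=-\int_{\partial\Omega_t}|\nabla u|^2V\cdot\nu_t,\]
which is \eqref{eq:first_derivative}.

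\emph{Second derivative.} We differentiate \eqref{eq:first_derivative} in $t$ again, writing it as $\int_{\partial\Omega_t}f_t\,V\cdot\nu_t$ with $f_t=|\nabla u_t|^2$ extended off the boundary. The Hadamard formula for boundary integrals along the flow gives
\[\frac{d}{dt}\int_{\partial\Omega_t}g_t=\int_{\partial\Omega_t}\big(g_t'+(\partial_{\nu}g_t+H_tg_t)V\cdot\nu_t\big)\]
for $g_t=f_t\,(V\cdot\nu_t)$, where the normal derivative of $V\cdot\nu_t$ and the time derivative of $\nu_t$ must be accounted for. The pieces are as follows.
\begin{enumerate}
\item The term $2\int\nabla u\cdot\nabla u'\,(V\cdot\nu_t)$ is rewritten with the boundary condition and then with the weak formulation of the $u'$ equation tested by $u'$. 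This produces $2\int|\nabla u'|^2-2\lambda\int u'^2+q\int v_{u'}u'$, using $\int uu'=0$ to remove the $\lambda'$ term.
\item The normal derivative $\partial_\nu|\nabla u|^2=2\nabla^2u[\nabla u]\cdot\nu_t$ is expressed through the equation: on the boundary, $\Delta u=H_t\partial_\nu u+\partial^2_{\nu\nu}u$, and $\Delta u=-\lambda u+\frac q2 v_u=\frac q2v_u$. This generates the $H_t|\nabla u|^2$ term and the $\frac q2 v_u|\nabla u|$ term (with sign fixed by $\partial_\nu u=-|\nabla u|$).
\item The tangential component $V^\tau$ contributes the $\nabla^2u[\nabla u]\cdot V^\tau$ term.
\item The derivative of the factor $V\cdot\nu_t$ along the flow, combined with the tangential divergence, gives the $\mathrm{div}(V)$ term.
\end{enumerate}

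The main obstacle is the bookkeeping in this second step. The field $V$ is not normal to $\partial\Omega_t$ for $t>0$, so the tangential parts and the derivative of $\nu_t$ (namely $\dot\nu_t=-\nabla_\Gamma(V\cdot\nu_t)+\dots$) must be tracked carefully. I would first verify the formula at $t=0$, where $V=\phi\nu$ is normal and it reduces to the known Dirichlet-eigenvalue formula plus the $q$ terms. I would then redo the general $t$ computation using the structure-theorem-style formula of \cite{DambrineLamboley} for non-normal fields, checking each term against that reference and treating the $q$-dependent parts as lower-order additions whose manipulation only relies on the symmetry of the Coulomb kernel.
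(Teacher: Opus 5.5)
Your proposal is correct and follows the same strategy the paper relies on. Existence of the derivatives comes from Theorem~\ref{thm: dini}. You introduce the Eulerian derivative $u'$ with its boundary condition and orthogonality to $u$. The first variation follows by testing the $u'$ equation with $u$ and using the symmetry of the Coulomb kernel. The paper's proof only invokes Theorem~\ref{thm: dini} and defers the computation to the appendix of \cite{BrascoDePhilippisVelichkov}, keeping the $\mathrm{div}(\phi\nu)$ term because the path is not volume preserving.

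Where you differ is the second derivative. You differentiate $\int_{\partial\Omega_t}|\nabla u_t|^2\,V\cdot\nu_t$ with the surface transport formula. This forces you to extend $\nu_t$ (signed distance) and to use $\partial_t\nu_t=-\nabla_\Gamma(V\cdot\nu_t)$ and $\partial_{\nu_t}(V\cdot\nu_t)=\nu_t\cdot DV\nu_t$ on $\partial\Omega_t$. You then integrate by parts tangentially, using $\int_{\partial\Omega_t}\mathrm{div}_\Gamma\big(|\nabla u|^2(V\cdot\nu_t)V^\tau\big)=0$, and regroup with $\mathrm{div}V=\mathrm{div}_\Gamma V^\tau+H_t\,V\cdot\nu_t+\nu_t\cdot DV\nu_t$. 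This works and gives exactly the stated coefficients.

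The stated form, with a $\nabla^2u[\nabla u]\cdot V^\tau$ term plus a full $\mathrm{div}(\phi\nu)$ term, comes out more directly another way. Write $\int_{\partial\Omega_t}|\nabla u_t|^2V\cdot\nu_t=\int_{\Omega_t}\mathrm{div}(|\nabla u_t|^2V)$ and apply the Reynolds formula:
\[
\frac{d}{dt}\int_{\Omega_t}\mathrm{div}\big(|\nabla u_t|^2V\big)\,dx=\int_{\partial\Omega_t}\Big(2\nabla u_t\cdot\nabla u_t'+2\nabla^2u_t[\nabla u_t]\cdot V+|\nabla u_t|^2\,\mathrm{div}V\Big)(V\cdot\nu_t)\,d\mathcal H^2 .
\]
After this you only split $V$ into normal and tangential parts and use $\partial^2_{\nu\nu}u=\frac q2 v_u-H_t\partial_{\nu}u$ on $\partial\Omega_t$. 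No moving normal and no structure theorem from \cite{DambrineLamboley} is needed.

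In either route, make one point explicit. $\Phi_t=\mathrm{Id}+tV$ is not the flow of $V$, so the Eulerian velocity at time $t$ is $V\circ\Phi_t^{-1}$. Accordingly, your formula $u'=\dot v\circ\Phi_t^{-1}-\nabla u\cdot V$ should have $V\circ\Phi_t^{-1}$ in place of $V$; the two differ in the interior, where the extension is cut off.

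Near $\partial\Omega_t$, for every $t$, $V\circ\Phi_t^{-1}$ equals $V$. This holds because there the extension is $\phi(x/|x|)\,x/|x|$: $\Phi_t$ maps rays to rays and $V$ is constant along them, i.e.\ $DV\,V=0$. This is what justifies evaluating $\phi\nu$ on $\partial\Omega_t$ in the boundary condition and in the transport formulas. It is also why no acceleration term appears in $\lambda_q''(t)$.
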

\begin{proof}
    The formal computations of $\lambda_q'(t)$ and $\lambda_q''(t)$ follow the same step of \cite[Appendix]{BrascoDePhilippisVelichkov}. Differently from \cite{BrascoDePhilippisVelichkov}, we do not work with a volume preserving path and this motivates the presence of the term $\int_{\partial \Omega_t}|\nabla u_{q,t}^\varphi|^2\text{div}(\phi\nu)(\phi\nu\cdot \nu_t)$ which is not $0$ a priori.
    Moreover Theorem \ref{thm: dini} validates the existence of the derivatives.
\end{proof}

\begin{remark}
We give a geometric interpretation of the quadratic form $\mathcal{Q}_{q,t}^\phi(h,h)$ and explain its relation to the normal velocity of the deformation. Fix $\phi$ and $t$, and set $\Omega=\Omega_t^\phi$. For sufficiently regular $h$, consider the deformation
\[
\partial\widehat\Omega_s =\left\{x+s\,h(x)\frac{x}{x\cdot\nu_t(x)}:\ x\in\partial\Omega\right\}.
\]
Then the quadratic form defined above represents the second variation along this deformation:
\[
\mathcal{Q}_{q,t}^\phi(h,h)=\left.\frac{d^2}{ds^2}E_q(\widehat\Omega_s)\right|_{s=0}.
\]

Here, $h$ is the prescribed normal velocity on $\partial\Omega$. Since we restrict ourselves to radial deformations (see \eqref{eq:radial deformation of B1}), this uniquely determines the boundary velocity field:
\[
V_h(x)=h(x)\frac{x}{x\cdot\nu_t(x)},\qquad V_h\cdot\nu_t=h.
\]
Thus, varying $h$ amounts to changing the radial deformation while keeping the reference domain $\Omega_t^\phi$ fixed. In particular, the tangential component of the velocity is also determined by $h$.

This formulation allows us to treat the normal velocity as an independent variable on each fixed domain and to study the dependence of the second variation on $q$ through the associated bilinear forms. We can therefore investigate convergence as $q\to0$ in operator norm, with estimates uniform in $\phi$ and $t$, thereby obtaining control of the second derivative $\lambda_q''(t)$ along the original paths.
\end{remark}

\section{Proof of the main result}
We are now in position to prove Theorem~\ref{thm:intro-rigidity}.

\begin{proof}[Proof of Theorem~\ref{thm:intro-rigidity}]
Let $\phi\in \mathcal{A}_{\delta_1}$ and let $\partial\Omega_t=\{x+tx\phi(x):x\in\partial B_1\}$. 
 Since the optimal function on the ball is radial by Theorem \ref{thm:simmetric solution on ball}, let us define $\mu_q$ to be the constant value of $|\nabla u_{q,B_1}|^2$ on $\partial B_1$. So we consider the function\[t\to j_q(t):=E_q(\Omega_t)+\mu_q\text{Vol}(\Omega_t)+\tilde C|\text{Bar}(\Omega_t)|^2+\tilde C|\text{Vol}(\Omega_t)-\text{Vol}(B_1)|^2,\] where $\tilde C$ will be chosen later, and we perform a Taylor-Lagrange expansion around $0$ and we evaluate it in $t=1$, so we get that
\begin{equation}\label{eq:}
j_q(1)=j_q(0)+j_q'(0)+\frac{1}{2}j_q''(0)+\int_0^1(1-s)(j_q''(s)-j_q''(0))\,ds.
\end{equation}
where, recalling \eqref{eq:first_derivative} and \eqref{eq:second_derivative},
\[
j_q'(0)=\lambda'_q(0)+\mu_q \text{Vol}'(B_1).
\]
In particular, by Theorem \ref{thm:simmetric solution on ball} and the definition of $\mu_q$, 
\begin{equation}
    j_q'(0)=0.
\end{equation}
Since \[\lambda_q''(t)=\langle \mathcal{L}^\phi_{q,t} \phi\nu\cdot \nu_t, \phi\nu\cdot \nu_t\rangle,\]
then by Lemma \ref{lem:convergence operator}, 
\begin{equation}\label{eq:rigidity1}
    \lambda_q''(t)=\langle \mathcal{L}^\phi_{0,t} \phi\nu\cdot \nu_t, \phi\nu\cdot \nu_t\rangle +o_q(1)||\phi\nu\cdot\nu_t||^2_{H^{1/2}(\partial  \Omega_t)}.
\end{equation}
Moreover $\mu_q\text{Vol}''(\Omega_t)[\phi \nu\cdot \nu_t,\phi \nu\cdot \nu_t]=2\mu_q\int_{\partial \Omega_t}\frac{(\phi \nu\cdot \nu_t)^2}{x\cdot\nu_t}d\mathcal H ^2$ and so it is easy to prove that it converge in the operatorial sense, thus it holds 
\begin{equation}\label{eq:rigidity2}
   j_q''(t)=j_0''(t) +o_q(1)||\phi\nu\cdot\nu_t||^2_{H^{1/2}(\partial  \Omega_t)}.
\end{equation}
Since $\langle\mathcal{L}^\phi_{0,t} \phi \nu\cdot \nu_t, \phi\nu\cdot \nu_t\rangle=\lambda''_0(t)$, by \cite[Theorem 1.4, Section 3.3]{DambrineLamboley} and up to choosing $\tilde C$ big enough, we obtain that

\begin{equation}\label{eq:sacre stime}
    j_0''(0)\geq C||\phi||^2_{H^{1/2}(\partial B_1)},\quad  |j''_0(s)-j_0''(0)|\leq \omega(||\phi||_{C^{2,\alpha}(\partial B_1)}||)\phi||^2_{H^{1/2}(\partial B_1)} 
\end{equation}
where $\omega$ is a modulus of continuity.
Thus
 \begin{align}\label{eq:rigidity3}
     j_q(1)\geq& j_q(0)+\frac{1}{2}j_0''(0)-o_q(1)||\phi||^2_{H^{1/2}(\partial B_1)}+\int_0^1(1-s)(j''_0(s)-j_0''(0))\,ds
     \\\geq & j_q(0)+\Big(C-o_q(1)-\omega(||\phi||_{C^{2,\alpha}(\partial B_1)}||)\Big)||\phi||^2_{H^{1/2}(\partial B_1)}
 \end{align}

Observe that the constant $C$ is independent on $q$ and $\delta$. Then by Theorem \ref{thm:main}, there exists $0< q_2\leq q_1$ for which we can reduce to show the rigidity on the ball between $C^{2,\alpha}$-nearly spherical with $C^{2,\alpha}$ norm lower than $\delta_1$ and with $\text{Bar}(\Omega)=0$. For such set, 
\[ j_q(1)- j_q(0)=E_q(\Omega)-E_q(B_1) \]
and so we can apply \eqref{eq:rigidity3} to obtain that the ball is the only minimizer, up to translation and sets of zero Lebesgue measure. By standard capacitary arguments, the ball is the only minimizer up to translation and sets of zero capacity.
\end{proof}
\begin{remark}\label{rem:spiegazione convergenza operatoriale}
We point out that to obtain \eqref{eq:rigidity1} it is needed the convergence of the associated operator proved in Lemma \ref{lem:convergence operator}. Indeed the pointwise convergence (or even uniform in $t\in[0,1]$) $\lambda_q''(t)\to \lambda_0''(t)$ is not enough since it leads to the estimate
\[ \lambda''_q(t)=\lambda''_0(t)+o_q(1),\]
and so  
\begin{align}
     j_q(1)\geq & j_q(0)+C||\phi||^2_{H^{1/2}(\partial B_1)}+o_q(1).
 \end{align}
 So we would only get the minimality of the ball among $C^{2,\alpha}$-nearly spherical domain such that $||\phi||^2_{H^{1/2}(\partial B_1)}\geq o_q(1)$ but this is not enough to conclude.
\end{remark}
\begin{remark}
    We want to stress the importance of the study of the functional $t\to j_q(t)$ instead of $t\to E_q(\Omega_t)$. Indeed, since the functional $E_q(\Omega_t)$ is translation invariant and minimized on a volume constraint, it would be false the inequality \[ E_q(\Omega^\phi)\geq E_q(B)+C||\phi||^2_{H^{1/2}(\partial B_1)},\quad \forall \phi\in\mathcal{A}_{\delta_1}\]
    For example, if we translate the ball with a small vector, then new set is a nearly spherical domain with same energy. This correction allows us to work with a sequence of sets $(\Omega_t)_t$ that are neither subject to volume constraints nor centered at the origin. This idea comes from \cite{DambrineLamboley}.
\end{remark}
\begin{remark}
    The reader could also be interested to apply these strategy to other kind of perturbations, different from the one following a normal path $\partial\Omega_t=\{x+tx\phi(x):x\in\partial B_1\}$. For example, these computations still holds for the volume preserving path introduced in \cite[Appendix]{BrascoDePhilippisVelichkov}. In our case this could not lead to the conclusion because there is no proof, up to our knowledge, of the inequality in \eqref{eq:sacre stime} which such path. Despite it, we still believe it should holds.
\end{remark}

\section*{Note on the use of AI}
The idea of the strategy of this paper is due only to the authors (who were inspired by the previous seminal works~\cite{BrascoDePhilippisVelichkov} and~\cite{DambrineLamboley}). The proof of Lemma~\ref{le:a_q coerciva e stime} was obtained through several interactions with AI models.
Moreover AI models have been used for literature review and for checking typos.

\vspace{0.2cm}

\paragraph{\bf Acknowledgements} 
The authors have been partially supported by the MUR and the European Union via PRIN2022 project P2022R537CS.
The authors are members of
INdAM-GNAMPA and have been partially supported by the INdAM-GNAMPA projects CUP E5324001950001 and CUP E53C25002010001.  \\
R. Moraschi have been partially supported by the LYSM lab of CNRS and INdAM.

\bibliographystyle{abbrv}
\bibliography{bibliografia}

\end{document}